\documentclass[11pt,a4paper]{article}

\usepackage{titlesec}
\usepackage{amssymb}
\usepackage{amsmath,amsthm,amssymb,color}
\usepackage[pdftex,pagebackref,colorlinks]{hyperref}
\usepackage{graphicx}
\usepackage{multirow}
\usepackage{makecell}
\usepackage{booktabs}
\usepackage{enumerate}
\date{}
\usepackage{url}
\urldef{\mailsa}\path|{alfred.hofmann, ursula.barth, ingrid.haas, frank.holzwarth,|
\urldef{\mailsb}\path|anna.kramer, leonie.kunz, christine.reiss, nicole.sator,|
\urldef{\mailsc}\path|erika.siebert-cole, peter.strasser, lncs}@springer.com|

\newtheorem{lemma}{Lemma}[section]
\newtheorem{theorem}[lemma]{Theorem}
\newtheorem{proposition}[lemma]{Proposition}

\newtheorem{remark}[lemma]{Remark}

\DeclareMathOperator{\Tr}{Tr}
\DeclareMathOperator{\ord}{ord}

\begin{document}

\title{Characterization of a class of complete permutation quadrinomials over \(\mathbb{F}_{2^{2m}}\)}

\author{Yanjun Li\thanks{Corresponding author
\newline\indent Yanjun Li is with the Institute of Statistics and Applied Mathematics, Anhui University of Finance and Economics,  Bengbu, Anhui 233030, China, and also with the Department of Mathematics, The Hong Kong University of Science and Technology, Hong Kong  (e-mail: yanjlmath90@163.com).
\newline \indent {Maosheng Xiong is with the Department of Mathematics, The Hong
Kong University of Science and Technology, Hong Kong (e-mail: mamsxiong@ust.hk).}},\, Maosheng Xiong}
\maketitle
\begin{abstract}
Let $q=2^m$, $Q=2^k$, and $1\leq k\leq m-1$.
We characterize complete permutation polynomials (CPPs) over $\mathbb{F}_{q^2}$
of the form
\[
f(x)=c_0x^{Q+1}+c_1x^{Q+q}+c_2x^{qQ+1}
+c_3x^{q(Q+1)},\qquad c_i\in\mathbb{F}_{q^2}.
\]
We prove that no such CPP exists when $k>1$, and recover the known characterization in the
cubic case $k=1$. This completes the classification throughout
the stated exponent range. The proof uses the known permutation
classification to reduce completeness to linear perturbations
of product and monomial models. The required
nonpermutation results follow from direct elementary arguments based
on the quadratic structure over $\mathbb{F}_2$.
\end{abstract}
\noindent {\bf Keywords:} Permutation polynomial, complete permutation polynomial, quadrinomial

\noindent {\bf MSC:} 11T06, 05A05, 12E10.

\section{Introduction}\label{intro}

Let $q$ be a prime power and let $\mathbb{F}_q$ denote the finite field with $q$ elements. A polynomial $f\in\mathbb{F}_q[x]$ is called a permutation polynomial (PP) over $\mathbb{F}_q$ if the induced map $\alpha\mapsto f(\alpha)$ is a bijection of $\mathbb{F}_q$. If both $f(x)$ and $f(x)+x$ are PPs over $\mathbb{F}_q$, then $f$ is called a complete permutation polynomial (CPP), or a complete mapping.

Complete mappings were introduced by Mann in connection with the construction of orthogonal Latin squares \cite{ref:11}; they were later investigated systematically over finite fields by Niederreiter and Robinson \cite{ref:14}. CPPs also have applications in combinatorial designs, coding theory, and cryptography; see the Handbook of Finite Fields \cite{ref:13} for background. Constructing CPPs is typically more difficult than constructing PPs, since one must verify two simultaneous permutation conditions. This is one reason why, despite the extensive literature on permutation polynomials, relatively few explicit families of CPPs are known.

CPPs with few terms have received particular attention. Monomial CPPs in even characteristic were studied in \cite{ref:17,ref:20,ref:21}. Further monomial CPPs were obtained through exceptional polynomials and related methods \cite{ref:1,ref:2}. Binomial and trinomial CPPs have also been studied; see, for instance, \cite{ref:7,ref:8,ref:19,ref:22}. These constructions lead to the question of determining all complete permutations within a prescribed family of polynomials.

The complete permutation problem has been studied for quadrinomials
of the cubic form
\[
f(x)=c_0x^3+c_1x^{q+2}+c_2x^{2q+1}+c_3x^{3q},
\qquad c_i\in\mathbb{F}_{q^2}.
\]
For $q=2^m$, Tu et al.\ \cite{ref:18} constructed a class of
CPPs of this form. Chan et al.\ \cite{Chanetal-2026} subsequently
obtained a structural characterization through simultaneous
$\mathbb{F}_q$-linear equivalence of $f(x)$ and $f(x)+x$ to a monomial and a binomial, respectively. Ding, Xiong, and Zieve
\cite[Theorems 1.1 and 1.3]{DXZ-2026} extended this work to arbitrary characteristic,
obtaining a complete characterization of CPPs of this cubic
form over $\mathbb{F}_{q^2}$ for every prime power $q$.

Here we study a larger family in characteristic two.
Let $q=2^m$, $Q=2^k$, and $1\leq k\leq m-1$. We consider
the complete permutation property of
\begin{align}\label{eq:1}
f(x)=c_0x^{Q+1}+c_1x^{Q+q}
+c_2x^{qQ+1}+c_3x^{q(Q+1)},
\qquad c_i\in\mathbb{F}_{q^2},
\end{align}
which reduces to the preceding cubic family when $Q=2$.
Equivalently,
\[
f(x)=x^{Q+1}A(x^{q-1}),\qquad
A(T)=c_0+c_1T+c_2T^Q+c_3T^{Q+1}.
\]
The permutation property of this family and closely related
forms was investigated in \cite{Golo2,ref:6,ref:15,ref:16}.
Ding and Zieve \cite{DZ-2023} obtained a complete classification
of the members of \eqref{eq:1} that permute $\mathbb{F}_{q^2}$.
The family also arises in the study of APN functions and
boomerang uniformity; see \cite{ref:5,ref:10,ref:23}.

We prove that no polynomial of the form \eqref{eq:1} is a CPP
when $k>1$. Our arguments also recover the known characterization
in the cubic case $k=1$. More precisely, we have the following result.

\begin{theorem}[Main theorem]\label{thm:1.1}
Let $q=2^m$, $Q=2^k$, and $1\leq k\leq m-1$.
Put $g=\gcd(m,k)$ and assume that $m/g$ is odd,
or equivalently, that $\gcd(Q+1,q-1)=1$.
Let $f$ be a quadrinomial of the form \eqref{eq:1}.
\begin{itemize}
\item[(i)]
If $k>1$, then $f$ is not a complete permutation polynomial
of $\mathbb{F}_{q^2}$.

\item[(ii)]
If $k=1$, then $f$ is a complete permutation polynomial
of $\mathbb{F}_{q^2}$ if and only if $m$ is odd and there
exist $\mathbb{F}_q$-linear permutations $L_1,L_2$ of
$\mathbb{F}_{q^2}$ and an element
$\gamma\in\mathbb{F}_{q^2}^*$ with
$\ord(\gamma^{q-1})=3$ such that
\[
f=L_1\circ C\circ L_2,\qquad
f+\mathrm{id}=L_1\circ(C+\gamma \, \mathrm{id})\circ L_2,
\]
where $C(x)=x^{q+2}$ and $\mathrm{id}$ is the identity map over $\mathbb{F}_{q^2}$.
\end{itemize}
\end{theorem}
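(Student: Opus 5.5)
The plan is to use the Ding--Zieve classification \cite{DZ-2023} of the permutation quadrinomials of the form \eqref{eq:1}. Under the hypothesis $\gcd(Q+1,q-1)=1$, that classification says that if $f$ permutes $\mathbb{F}_{q^2}$, then $f=L_1\circ M\circ L_2$. Here $L_1,L_2$ are $\mathbb{F}_q$-linear permutations of $\mathbb{F}_{q^2}$, and $M$ is drawn from a short list of models. These are essentially the monomial $x^{Q+1}$ (or its conjugate $x^{q(Q+1)}$), a monomial of the shape $x^{Q+q}$ (for $k=1$ this is $C(x)=x^{q+2}$), and a "product"-type model of the form $x^{Q+1}$ composed with a linearized map such as $x\mapsto x^{q}+ax$. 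Applying the linear maps, $f+\mathrm{id}=L_1\circ(M+L)\circ L_2$ with $L=L_1^{-1}\circ L_2^{-1}$, which is an arbitrary $\mathbb{F}_q$-linear permutation, hence of the form $L(x)=ax+bx^q$. So completeness reduces to the following question: for which $\mathbb{F}_q$-linear permutations $L$ is $M+L$ a permutation of $\mathbb{F}_{q^2}$, for each model $M$?

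Next I would treat each model in turn, working over $\mathbb{F}_2$. Since $M$ is a quadratic form (its exponents have binary weight $2$), $M+L$ permutes if and only if, for every nonzero $u$, the map $D_u(x)=M(x+u)+M(x)+M(u)+L(u)$ never vanishes. Here $D_u(x)$ is $\mathbb{F}_2$-affine in $x$. Equivalently, $M(x+u)+M(x)\ne L(u)+M(u)$ for all $x$ and all $u\ne0$. The bilinear part $B(x,u)=M(x+u)+M(x)+M(u)$ is explicit: for $M=x^{Q+1}$ it is $x^Qu+xu^Q$. For fixed $u$, the image of $x\mapsto B(x,u)$ is an $\mathbb{F}_2$-subspace of codimension equal to $\dim\ker$. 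One then needs to show that some $u$ has $L(u)+M(u)$ inside that image.

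Concretely, I would pick $u$ in a convenient subset, such as $\mathbb{F}_q$ or the norm-one circle $\mu_{q+1}$, and scale $x=ut$. This reduces the question to whether an equation $t^Q+t=\lambda(u)$, with $\lambda(u)=u^{-Q-1}(L(u)+u^{Q+1})$, is solvable. For $k>1$ the kernel of $t\mapsto t^Q+t$ on $\mathbb{F}_{q^2}$ is $\mathbb{F}_{2^{\gcd(2m,k)}}$, and solvability is a trace condition on $\lambda(u)$. I would then count, or exhibit explicitly, a $u$ making that trace vanish. The expectation is that for $k>1$ the extra freedom (a codimension of at least $2$, or an incompatibility of degrees in $u$) always produces such a $u$, so $M+L$ is never a permutation. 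This gives (i).

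For $k=1$ the same analysis should leave exactly one surviving case, namely $M=C$ with $L=\gamma\,\mathrm{id}$. The condition $\ord(\gamma^{q-1})=3$ and the parity condition on $m$ should come out of requiring that a cubic in $\gamma^{q-1}$ have no roots of the relevant type. This would recover the known characterization from \cite{DXZ-2026}, which I would also cite for the converse direction.

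The main obstacle I expect is the product-type model in the case $k>1$. There $M+L$ mixes $x$ and $x^q$ nontrivially, and no single scaling trick handles all $L=ax+bx^q$. My first attempt would be to restrict $x$ and $u$ to $\mathbb{F}_q$, or to $\mu_{q+1}$, where $x^q$ becomes $x$ or $x^{-1}$. This collapses $M+L$ to a map on a smaller set, and a collision can then be found from the non-bijectivity of $t\mapsto t^Q+ct$ when $\gcd(Q-1,\cdot)$ is nontrivial. If that fails, I would fall back on a character-sum or counting argument showing that the number of "bad" $u$ is positive.
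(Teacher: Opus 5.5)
\textbf{There is a genuine gap.} Your skeleton matches the paper's: invoke the Ding--Zieve classification, transport $f+\mathrm{id}$ to $R+T$ with $T=F_1^{-1}\circ F_2^{-1}$ invertible and $\mathbb{F}_q$-linear, and use that $R$ is quadratic over $\mathbb{F}_2$ to turn a collision into a trace condition. But the step that carries all the content is left as an expectation: the existence of a collision direction satisfying that trace condition.

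\textbf{The product model.} You have misidentified it. In the classification it is $P(y,z)=(y^{Q+1},z^{Q+1})$ on $\mathbb{F}_q^2$, perturbed by $T(y,z)=(ay+bz,cy+dz)$ with $ad+bc\neq0$. It is not $x^{Q+1}$ composed with a linearized map; for invertible $L$, $L(x)^{Q+1}$ is just linearly equivalent to $x^{Q+1}$. It must also be excluded for every $k$, including $k=1$, or the ``only if'' in (ii) fails. Restricting to $\mathbb{F}_q$ or $\mu_{q+1}$ does not address this model. The missing idea is to take the direction $(u,v)$ with $v=ru$ and $r,u\in\mathbb{F}_q^*$. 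The coordinates then decouple into $u^QY+uY^Q=u^{Q+1}+(a+br)u$ and $v^QZ+vZ^Q=v^{Q+1}+cu+dv$. By Lemma~\ref{lem:3.1}(1) and $\Tr_{\mathbb{F}_q/\mathbb{F}_s}(1)=1$, both are solvable iff $\Tr_{\mathbb{F}_q/\mathbb{F}_s}(\alpha w)=\Tr_{\mathbb{F}_q/\mathbb{F}_s}(\beta w)=1$, where $w=u^{-Q}$, $\alpha=a+br$ and $\beta=(c+dr)/r^{Q+1}$. Independence of the two trace functionals makes this possible whenever $\beta/\alpha\notin\mathbb{F}_s\setminus\{1\}$. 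Such an $r$ exists: for each $\lambda\in\mathbb{F}_s\setminus\{0,1\}$ the bad $r$ are roots of the nonzero polynomial $\lambda br^{Q+2}+\lambda ar^{Q+1}+dr+c$. This excludes at most $2+(s-2)(Q+2)<q-1$ values.

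\textbf{The monomial models.} Your scaling gives the right requirement, a nonzero $h$ with $\Tr_{\mathbb{F}_{q^2}/\mathbb{F}_{s^2}}(T(h)/R(h))=1$, but you give no way to produce it. Your character-sum fallback is not safe: the degrees involved are of size $Q$, up to $q/2$, so Weil-type bounds fail once $Q$ is well above $\sqrt q$. The paper writes $h=yu$ with $y\in\mathbb{F}_q^*$ and $u\in\mu_{q+1}$, so that $T(h)/R(h)=y^{-Q}S(u)$. As $y$ varies, the trace attains $1$ exactly when $S(u)\notin\mathcal{B}$, the union of the $s$ lines $(\eta+\omega)\mathbb{F}_q$ (Lemma~\ref{lem:4.1}). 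Also, $S(u)\in\mathcal{B}$ iff $S(u)^q=\rho S(u)$ for some $\rho\in\mu_{s+1}\setminus\{1\}$.
\begin{itemize}
\item For $x^Q\overline{x}$, this becomes in $V=u^2$ the polynomial $\delta^qV^Q+\gamma^qV^{Q-1}+\rho\gamma V+\rho\delta$. That leaves at most $sQ\le q$ bad points of $\mu_{q+1}$.
\item For $x^{Q+1}$, the naive degree $Q+2$ is too large when $M=K+1$. One must raise to the power $t=q/Q$ and use $V^{q+1}=1$ to reach degree $2t-1$.
\end{itemize}

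\textbf{The $k=1$ exception.} It does not come from a cubic lacking roots. It comes from the polynomial above vanishing identically. That needs $Q-1=1$, $\delta=0$ and $\gamma^q=\rho\gamma$, i.e.\ $\ord(\gamma^{q-1})=3$. Your ``codimension at least $2$'' heuristic cannot separate $k>1$ from $k=1$. For $C(x)=x^2\overline{x}$ with $m$ odd, the kernel of $z\mapsto z^2+\overline{z}$ is $\mathbb{F}_4$, again of codimension $2$. Note also that for $x^Q\overline{x}$ the relevant operator is $z\mapsto z^Q+\overline{z}$, not $t\mapsto t^Q+t$.

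\textbf{A minor slip.} Your $D_u$ carries a stray $M(u)$. The collision condition is $M(x+u)+M(x)=L(u)$, which is what your $\lambda(u)$ actually encodes.
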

The restriction that $m/g$ be odd is necessary even for
$f$ to be a permutation polynomial: any polynomial of
the form \eqref{eq:1} that permutes $\mathbb{F}_{q^2}$
must satisfy $\gcd(Q+1,q-1)=1$ \cite{ref:6}.
Thus Theorem~\ref{thm:1.1} gives a complete characterization
of the CPPs of the form \eqref{eq:1} throughout the stated
exponent range.

The two identities in part~(ii) use the same maps $L_1,L_2$; in particular, they require $L_1\circ(\gamma\operatorname{id})\circ L_2=\operatorname{id}$. This is the simultaneous equivalence condition in the cubic characterization.

We now outline the proof of Theorem~\ref{thm:1.1}.
The classification of Ding and Zieve \cite{DZ-2023}, in the
form given in \cite[Theorem 7]{Chanetal-2026}, shows that every
permutation polynomial of the form \eqref{eq:1} is
$\mathbb{F}_q$-linearly equivalent either to the product model
\[
P(y,z)=(y^{Q+1},z^{Q+1})
\quad\text{on }\mathbb{F}_q^2,
\]
or to one of the monomial models
\[
M_=(x)=x^{Q+q},\qquad M_<(x)=x^{Q+1}
\quad\text{on }\mathbb{F}_{q^2}.
\]
The model $M_=$ occurs when $m$ and $k$ have the same
$2$-adic valuation, and $M_<$ occurs when the valuation of $m$
is smaller than that of $k$; see
Theorem~\ref{thm:2.2} in Section~\ref{sec:Pre}.

Suppose that $f=F_1\circ R\circ F_2$, where $R$ is one of
these models and $F_1,F_2$ are $\mathbb{F}_q$-linear
isomorphisms between the relevant spaces. Then
\[
F_1^{-1}\circ(f+\operatorname{id})\circ F_2^{-1}
=R+T,\qquad T=F_1^{-1}\circ F_2^{-1}.
\]
Thus $f$ is a CPP if and only if $R+T$ is a permutation,
where $T$ is an invertible $\mathbb{F}_q$-linear map.
This reduction was already used in the cubic characterization
\cite[Theorem 12]{Chanetal-2026}; see also
\cite[Section 6]{DXZ-2026}.
It reduces the complete permutation problem to the study
of linear perturbations of the canonical models.

We resolve these perturbation questions in
Propositions~\ref{prop:3.4} and~\ref{prop:5.36.3}.
The product model admits no invertible $\mathbb{F}_q$-linear
perturbation that is a permutation. For the monomial models,
we treat all nonzero $\mathbb{F}_q$-linear perturbations and
obtain precisely the cubic exception in
Theorem~\ref{thm:1.1}.

Our proofs of these obstructions use direct elementary
arguments based on the quadratic structure of the models
over $\mathbb{F}_2$. For the product model, linear algebra
and trace selection replace the character sum estimates
in \cite[Theorem 10]{Chanetal-2026}. For the monomial models,
our treatment extends the elementary argument in
\cite[Appendix A]{Chanetal-2026}.
These perturbation proofs require neither Hermite's criterion
nor the methods involving Weil's bound and permutation groups
used in the arbitrary characteristic treatment
\cite[Sections 2 and 3]{DXZ-2026}.

The paper is organized as follows. Section~\ref{sec:Pre} recalls the classification and develops the technical tools. The next section treats product and monomial perturbations in Subsections~\ref{sec:3} and~\ref{sec:4}, respectively. Section~\ref{sec:7} combines these results to prove the main theorem and ends with brief concluding remarks.


\section{Preliminaries}\label{sec:Pre}

This section recalls the permutation classification and gives the image descriptions and trace-selection results used in Section~\ref{sec:models}.

Throughout the remainder of the paper, $m,k$ are positive integers with $1\leq k\leq m-1$. We collect the shared notation below.

\begin{center}
\renewcommand{\arraystretch}{1.15}
\begin{tabular}{@{}p{0.28\textwidth}p{0.67\textwidth}@{}}
\toprule
Notation & Meaning \\
\midrule
$q,Q$ & $q=2^m$, $Q=2^k$ \\
$g,s$ & $g=\gcd(m,k)$, $s=2^g$ \\
$M,K$ & $M=m/g$, $K=k/g$ \\
$\overline{x}$ & $x^q$ for $x\in\mathbb{F}_{q^2}$ \\
$\ord_2(n)$ & The $2$-adic valuation of a positive integer $n$ \\
$\mu_d$ & The group of $d$th roots of unity in $\mathbb{F}_{q^2}$, for $d\mid q^2-1$ \\
$\Tr_{\mathbb{F}_q/\mathbb{F}_s}$ & The relative trace from $\mathbb{F}_q$ to $\mathbb{F}_s$ \\
$\Tr_{\mathbb{F}_{q^2}/\mathbb{F}_{s^2}}$ & The relative trace from $\mathbb{F}_{q^2}$ to $\mathbb{F}_{s^2}$ \\
\bottomrule
\end{tabular}
\end{center}

Then $K<M$, $\gcd(M,K)=1$, $q=s^M$, and $Q=s^K$. If $M$ is odd, then
\[
\mathbb{F}_q\cap\mathbb{F}_{s^2}=\mathbb{F}_s.
\]
For $x\in\mathbb{F}_q$,
\[
\Tr_{\mathbb{F}_q/\mathbb{F}_s}(x)=\sum_{j=0}^{M-1}x^{s^j},
\]
and the unit circle is
\[
\mu_{q+1}=\{u\in\mathbb{F}_{q^2}^*:u^{q+1}=1\}.
\]
For $u\in\mu_{q+1}$, we have $\overline{u}=u^{-1}$.

\subsection{Arithmetic and linear equivalence}\label{subsec:linear-equivalence}

The following equivalence relates the parity of $m/g$ to the arithmetic condition in the permutation classification.

\begin{lemma}[{\cite[Theorem~2.1]{Golo2}}]\label{lem:2.1}
Let $q = 2^{m}$, $Q = 2^{k}$, and let $g = \gcd(m,k)$. Then
\[
\gcd(Q + 1,\, q - 1) = 1 \iff \frac{m}{g} \text{ is odd}.
\]
Equivalently, \(\gcd(Q+1,q-1)=1\) if and only if \(\ord_2(m)\le\ord_2(k)\).
\end{lemma}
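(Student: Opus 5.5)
The plan is to reduce everything to the standard identity $\gcd(2^a-1,2^b-1)=2^{\gcd(a,b)}-1$ together with elementary facts about the multiplicative order of $2$. Put $d=\gcd(Q+1,q-1)$. Since $Q+1$ divides $Q^2-1=2^{2k}-1$, every common divisor of $Q+1$ and $q-1$ divides
\[
\gcd(2^{2k}-1,2^m-1)=2^{\gcd(2k,m)}-1 .
\]

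\emph{First implication: $M=m/g$ odd gives $d=1$.} If $M$ is odd, then $\gcd(2k,m)=g$, because $m/g$ is odd and $\gcd(2K,M)=\gcd(K,M)=1$. So $d$ divides $2^g-1$. On the other hand, $g\mid k$, so $Q=2^k\equiv 1 \pmod{2^g-1}$, and therefore $Q+1\equiv 2 \pmod{2^g-1}$. Hence $d$ divides $\gcd(2,2^g-1)=1$, and $d=1$.

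\emph{Second implication: $M$ even gives $d>1$.} If $M$ is even, then $K$ is odd because $\gcd(M,K)=1$. Now $2g\mid m$, so $s^2-1=2^{2g}-1$ divides $q-1$, and $s+1$ divides $q-1$. Since $K$ is odd, $s+1$ also divides $s^K+1=Q+1$. Thus $s+1\ge 3$ is a common divisor, and $d>1$.

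\emph{The $2$-adic reformulation.} It remains to check that $M$ odd is the same as $\ord_2(m)\le\ord_2(k)$. Write $m=gM$ and $k=gK$ with $\gcd(M,K)=1$. If $M$ is odd, then $\ord_2(m)=\ord_2(g)\le\ord_2(k)$. If $M$ is even, then $K$ is odd, so $\ord_2(k)=\ord_2(g)<\ord_2(m)$.

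The argument is entirely routine. The only delicate step is the computation $\gcd(2k,m)=g$ when $M$ is odd, which is where the coprimality $\gcd(M,K)=1$ is used.
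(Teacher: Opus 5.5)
Your proof is correct and complete; I checked each step, including $\gcd(2k,m)=g\gcd(2K,M)=g$ for odd $M$ and the divisibility $s+1\mid s^K+1$ for odd $K$. The paper gives no argument of its own for this lemma. It imports the equivalence from \cite[Theorem~2.1]{Golo2} and uses it only as a black box, namely to discard the case of even $m/g$ at the start of Section~\ref{sec:7}. The second, $2$-adic formulation is likewise stated without proof.

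Your route is a short self-contained derivation based on $\gcd(2^a-1,2^b-1)=2^{\gcd(a,b)}-1$:
\begin{itemize}
\item For odd $M$, you bound $d$ by $2^{\gcd(2k,m)}-1=2^g-1$ and kill it with $Q\equiv 1 \pmod{2^g-1}$.
\item For even $M$, you exhibit the explicit common factor $s+1$.
\item The splitting $m=gM$, $k=gK$ with $\gcd(M,K)=1$ then handles the valuation restatement.
\end{itemize}
This makes the paper's reduction independent of an outside reference for an elementary fact, while the citation only buys brevity.

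With one more line your argument gives the exact value. When $M$ is even, $K$ is odd, so $\gcd(2K,M)=2$ and hence $d\mid 2^{2g}-1=(s-1)(s+1)$. Since $Q+1\equiv 2 \pmod{s-1}$ and $d$ is odd, $\gcd(d,s-1)=1$, so $d\mid s+1$. Combined with your $s+1\mid d$, this gives $d=s+1$. This recovers the familiar statement that $\gcd(2^k+1,2^m-1)$ equals $1$ or $2^{\gcd(m,k)}+1$ according as $m/\gcd(m,k)$ is odd or even.
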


We recall the classification of Ding and Zieve \cite{DZ-2023}, in the form given in \cite[Theorem 7]{Chanetal-2026}. It supplies the canonical models used in the proof of the main theorem.

\begin{theorem} \label{thm:2.2}
Let $q = 2^m$, $Q = 2^k$, and $1 \leq k \leq m-1$. Let $A(x)\in\mathbb{F}_{q^2}[x]$ be a quadrinomial of the form $A(x)=c_0 + c_1 x + c_2 x^Q + c_3 x^{Q+1}$.
Suppose
\[
f(x) = x^{Q+1} A(x^{q-1})
\]
permutes $\mathbb{F}_{q^2}$. Then $\gcd(Q + 1, q - 1) = 1$, and $f$ is $\mathbb{F}_q$-linearly equivalent to one of the following three models:
\begin{enumerate}[(i)]
    \item the \textit{product model}
    \[
    P \colon \mathbb{F}_q^2 \to \mathbb{F}_q^2,\quad P(y,z) = \big(y^{Q+1},\, z^{Q+1}\big);
    \]
    \item if $\ord_2(m) = \ord_2(k)$, the monomial model
    \[
    M_=(x) = x^Q \overline{x};
    \]
    \item if $\ord_2(m) < \ord_2(k)$, the monomial model
    \[
    M_<(x) = x^{Q+1}.
    \]
\end{enumerate}
More explicitly, in the product case there exist $\mathbb{F}_q$-linear isomorphisms \(F_1, F_2\) of the respective forms
\begin{align*}
&F_1 \colon \mathbb{F}_q^2 \to \mathbb{F}_{q^2}, \quad (y,z)\mapsto \alpha y+\beta z, \,\, \alpha, \beta \in \mathbb{F}_{q^2}\\
&F_2 \colon \mathbb{F}_{q^2} \to \mathbb{F}_q^2, \quad
x\mapsto(\alpha_1 x+\overline{\alpha}_1\overline{x}, \alpha_2 x+\overline{\alpha}_2\overline{x}), \,\, \alpha_1, \alpha_2 \in \mathbb{F}_{q^2}
\end{align*}
such that $f = F_1 \circ P \circ F_2$. In the monomial cases there exist $\mathbb{F}_q$-linear permutations $L_1, L_2$ of $\mathbb{F}_{q^2}$ of the form
\[
L_i(x) = \alpha_i x + \beta_i \overline{x}, \,\, \alpha_i,\beta_i \in \mathbb{F}_{q^2}
\]
such that $f = L_1 \circ R \circ L_2$, where $R$ denotes the corresponding monomial model listed above.
\end{theorem}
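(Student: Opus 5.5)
The plan is to reduce the statement to a classification of degree-$(Q+1)$ rational functions that permute the unit circle $\mu_{q+1}$, and then to lift that classification back to $\mathbb{F}_q$-linear equivalences. Since $f(x)=x^{Q+1}A(x^{q-1})$, the standard multiplicative criterion (Park--Lee / Zieve) says that $f$ permutes $\mathbb{F}_{q^2}$ if and only if two conditions hold. The first is $\gcd(Q+1,q-1)=1$. The second is that
\[
h(x)=x^{Q+1}A(x)^{q-1}
\]
permutes $\mu_{q+1}$. The first condition gives the arithmetic claim at once, and Lemma~\ref{lem:2.1} turns it into $\ord_2(m)\le\ord_2(k)$. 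On $\mu_{q+1}$ we have $\overline{x}=x^{-1}$, so $h$ agrees there with the rational function
\[
x^{Q+1}\,\overline{A}(1/x)/A(x)=\frac{\overline{c_3}+\overline{c_2}x+\overline{c_1}x^{Q}+\overline{c_0}x^{Q+1}}{c_0+c_1x+c_2x^Q+c_3x^{Q+1}},
\]
which has degree at most $Q+1$ and is built only from the monomials $1,x,x^Q,x^{Q+1}$.

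The core step is the Ding--Zieve theorem, which I would assume rather than reprove. It states that such a rational function permutes $\mu_{q+1}$ exactly when it has one of the following shapes.
\begin{itemize}
\item[(a)] $\mu\circ x^{Q+1}\circ\nu$, where $\nu$ is a degree-one map sending $\mu_{q+1}$ onto $\mathbb{P}^1(\mathbb{F}_q)$ and $\mu$ sends $\mathbb{P}^1(\mathbb{F}_q)$ back to $\mu_{q+1}$. Here one uses that $x^{Q+1}$ permutes $\mathbb{P}^1(\mathbb{F}_q)$ because $\gcd(Q+1,q-1)=1$.
\item[(b)] $\mu\circ x^{Q+1}\circ\nu$ or $\mu\circ x^{-Q+\cdots}$-type composites with $\mu,\nu$ preserving $\mu_{q+1}$. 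These yield the monomials $x^{Q+1}$ and $x^{Q}\overline{x}=x^{Q-1}$ on the circle, which permute $\mu_{q+1}$ precisely under the stated valuation conditions: $\gcd(Q-1,q+1)=1$ requires $\ord_2(m)=\ord_2(k)$, and $\gcd(Q+1,q+1)=1$ requires $\ord_2(m)<\ord_2(k)$.
\end{itemize}

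Next I would lift each degree-one map back to $\mathbb{F}_{q^2}$. A Möbius map $(ax+b)/(cx+d)$ preserving $\mu_{q+1}$ corresponds to the $\mathbb{F}_q$-linear map $x\mapsto \alpha x+\beta\overline{x}$, because on the circle $x\overline{x}=1$ identifies the numerator with a semilinear expression. A map from $\mu_{q+1}$ to $\mathbb{P}^1(\mathbb{F}_q)$ corresponds to a pair of traces $x\mapsto(\alpha_1x+\overline{\alpha_1}\overline{x},\,\alpha_2x+\overline{\alpha_2}\overline{x})$, which is exactly the map $F_2$ in the statement. For the outer side I would use that $f$ is determined on each line $\mathbb{F}_q^*x$ by $h$ up to the scalar $\lambda^{Q+1}$. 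Comparing $f$ with $F_1\circ R\circ F_2$, the two sides have the same "direction map" on the projective line and are both $(Q+1)$-homogeneous under $\mathbb{F}_q^*$ scaling, since $y^{Q+1}$ scales by $\lambda^{Q+1}$. So after adjusting $F_1$ by a scalar they agree on a representative of each line, hence everywhere.

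I expect this last bookkeeping step to be the main obstacle. One has to show that the outer degree-one maps really come from a single linear $F_1$ or $L_1$, with coefficients matching those of $f$ itself and not merely its projectivization. The scalar ambiguity on each line must be uniform, and I would handle that by comparing the four coefficients $c_i$ directly with the expansion of $F_1\circ R\circ F_2$.
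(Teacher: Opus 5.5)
The paper does not prove this statement itself. It imports the Ding--Zieve classification in the already linearized form of Chan et al., Theorem~7. Your plan instead derives that form from the classification of the induced map $h$ on $\mu_{q+1}$, and the early steps are sound:
the multiplicative criterion, the formula for $h$ (valid because $A$ cannot vanish on $\mu_{q+1}$ once $h$ permutes it), the gcd computations, and the lifting of degree-one maps to $x\mapsto\alpha x+\beta\overline{x}$ or to $F_2$, unique up to $\mathbb{F}_q^*$.

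\textbf{The gap is in the final step, and it is not bookkeeping.} Equal direction maps and $(Q+1)$-homogeneity give only $f(x)=c(x^{q-1})\,G(x)$. Here $G=F_1\circ R\circ F_2$ (or $L_1\circ R\circ L_2$), and $c\colon\mu_{q+1}\to\mathbb{F}_q^*$ is an arbitrary function. The map $h$ carries no information about $c$, and rescaling $F_1$ removes only a constant $c$.

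Nonconstant $c$ genuinely occurs in the family. Take $k=1$, $m$ odd, $b_0\in\mathbb{F}_{q^2}^*$, and $b_1\in\mathbb{F}_q$ outside the image of $u\mapsto\Tr_{\mathbb{F}_{q^2}/\mathbb{F}_q}(\overline{b_0}\,u)$ on $\mu_{q+1}$; this image has only $q/2+1$ elements. Then
\[
f(x)=b_0x^3+b_1x^{q+2}+\overline{b_0}\,x^{2q+1}
=\bigl(b_1+\Tr_{\mathbb{F}_{q^2}/\mathbb{F}_q}(\overline{b_0}\,x^{q-1})\bigr)\,x^{q+2}
\]
permutes $\mathbb{F}_{q^2}$ and has direction map $u\mapsto u$, exactly like $x^{q+2}$. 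Yet it is not an $\mathbb{F}_q^*$-multiple of $x^{q+2}$, and such multiples are all that lifting the factorization $\mathrm{id}\circ u^{Q-1}\circ\mathrm{id}$ can produce. For $M_<$ with $k=m-1$ the same happens with $x^{Q+1}\bigl(b_0+\Tr_{\mathbb{F}_{q^2}/\mathbb{F}_q}(b_2x^{Q(q-1)})\bigr)$. This lies in the family because $u^{-Q}=u^{Q+1}$ on $\mu_{q+1}$ when $q=2Q$.

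By the theorem, the displayed $f$ equals $t\,L^{-1}\circ x^{q+2}\circ L$ for some $t\in\mathbb{F}_q^*$ and some $\mathbb{F}_q$-linear $L$. The M\"obius map $\nu$ of $L$ is nontrivial and depends on $(b_0,b_1)$. So the correct factorization of $h$ must be read off from the coefficients of $f$, not from $h$. Matching the $c_i$ against the expansion of a fixed $F_1\circ R\circ F_2$, as you propose, cannot close the argument.

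What is missing is a proof of the following: every member of the family whose direction map is $\mu\circ R'\circ\nu$ (with $R'$ the map induced by $R$ on lines) equals $L_1'\circ R\circ L_2'$ for some possibly different pair. One way is to determine which functions $c$ can occur, and show that each comes from a pair whose M\"obius maps $\mu',\nu'$ satisfy $\mu'\circ R'\circ\nu'=R'$. For $2\le k\le m-2$ a degree count does force $c$ to be constant. It fails at $k=1$, which is precisely the cubic case behind Theorem~\ref{thm:1.1}(ii).

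You should also state item (b) of the classification precisely. Include the degenerate case where $A$ and $x^{Q+1}\overline{A}(1/x)$ share a factor, since that is where $x^{Q-1}$, of degree below $Q+1$, comes from.
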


\subsection{Image descriptions and trace selection}\label{subsec:images-product}\label{subsec:scalar-selection}

In this subsection and in Section~\ref{sec:models}, we assume that $m/g$ is odd.

The next lemma converts the collision equations into trace conditions. Part~(1) is used for the product model, while parts~(2) and~(3) are used for the two monomial models.

\begin{lemma}\label{lem:3.1}
The following results hold.
\begin{itemize}
\item[(1)] For $u\in\mathbb{F}_q^*$ and $W\in\mathbb{F}_q$, the equation $u^QY+uY^Q=W$
is solvable for $Y\in\mathbb{F}_q$ if and only if
\[
\Tr_{\mathbb{F}_q/\mathbb{F}_s}\left(\frac{W}{u^{Q+1}}\right)=0.
\]

\item[(2)] If $\ord_2(m)=\ord_2(k)$, then for $W\in \mathbb{F}_{q^2}$, the equation $Y^Q+\overline{Y}=W$
is solvable for $Y\in \mathbb{F}_{q^2}$ if and only if
\[
\Tr_{\mathbb{F}_{q^2}/\mathbb{F}_{s^2}}(W)=0.
\]

\item[(3)] If $\ord_2(m)<\ord_2(k)$, then for $W\in \mathbb{F}_{q^2}$, the equation $Y^Q+Y=W$
is solvable for $Y\in \mathbb{F}_{q^2}$ if and only if
\[
\Tr_{\mathbb{F}_{q^2}/\mathbb{F}_{s^2}}(W)=0.
\]
\end{itemize}
\end{lemma}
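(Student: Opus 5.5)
Each of the three maps is additive, so in every part I will describe its kernel, count its image, and then identify the image with the kernel of a suitable trace.

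\emph{Part (1).} Substituting $Y=uZ$ turns the equation into $u^{Q+1}(Z+Z^Q)=W$. So it suffices to show that the $\mathbb{F}_2$-linear map $\phi(Z)=Z^Q+Z$ on $\mathbb{F}_q$ has image exactly $\ker\Tr_{\mathbb{F}_q/\mathbb{F}_s}$. I will proceed in three steps.
\begin{itemize}
\item The kernel of $\phi$ is $\mathbb{F}_q\cap\mathbb{F}_Q=\mathbb{F}_{2^{\gcd(m,k)}}=\mathbb{F}_s$, so the image has size $q/s$.
\item The image lies in the kernel of the trace. Since $Q=s^K$, the map $Z\mapsto Z^Q$ is a power of Frobenius over $\mathbb{F}_s$ and hence commutes with $\Tr_{\mathbb{F}_q/\mathbb{F}_s}$. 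Therefore $\Tr(Z^Q+Z)=\Tr(Z)^Q+\Tr(Z)=0$, because $\Tr(Z)\in\mathbb{F}_s\subseteq\mathbb{F}_Q$.
\item The trace kernel also has size $q/s$, since the trace is onto $\mathbb{F}_s$. The two sets are therefore equal.
\end{itemize}
This part does not use that $m/g$ is odd.

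\emph{Part (3).} The same argument works over $\mathbb{F}_{q^2}=\mathbb{F}_{2^{2m}}$. The kernel of $Y\mapsto Y^Q+Y$ is $\mathbb{F}_{2^{\gcd(2m,k)}}$. If $\ord_2(m)<\ord_2(k)$, then $\gcd(2m,k)=2g$, so the kernel is $\mathbb{F}_{s^2}$. Since $Q=s^K=(s^2)^{K/2}$ with $K$ even ($M$ odd and $\ord_2 k>\ord_2 m$), the map $Y\mapsto Y^Q$ is an $\mathbb{F}_{s^2}$-Frobenius power. The image then lies in $\ker\Tr_{\mathbb{F}_{q^2}/\mathbb{F}_{s^2}}$, and the counting argument of part (1) finishes.

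\emph{Part (2).} This is the only part that needs care. Here $\ord_2(m)=\ord_2(k)$, so $K$ is odd. Write $\psi(Y)=Y^Q+Y^q$ and put $\sigma(Y)=Y^{s}$.
\begin{itemize}
\item \emph{Kernel.} $\psi(Y)=0$ means $Y^{Q}=Y^{q}$, equivalently $Y^{Qq}=Y$ after raising to the $q$th power, i.e.\ $Y^{2^{k+m}}=Y$. Hence $Y\in\mathbb{F}_{2^{\gcd(2m,k+m)}}$. Now $\gcd(2m,k+m)=g\gcd(2M,K+M)$, and $K+M$ is even because $M$ and $K$ are both odd. Also $\gcd(M,K+M)=\gcd(M,K)=1$. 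Therefore $\gcd(2M,K+M)=2$, so the kernel is $\mathbb{F}_{s^2}$ and the image has size $q^2/s^2$.
\item \emph{Image inside the trace kernel.} Both $Y\mapsto Y^Q$ and $Y\mapsto Y^q$ are powers of $\sigma$, so they commute with $\Tr_{\mathbb{F}_{q^2}/\mathbb{F}_{s^2}}$. For $t=\Tr(Y)\in\mathbb{F}_{s^2}$ this gives $\Tr(\psi(Y))=t^{Q}+t^{q}$. As $K$ and $M$ are odd, both $t^Q$ and $t^q$ equal $t^{s}$, so their sum is $0$.
\item \emph{Conclusion.} Counting as before gives equality of the image with $\ker\Tr_{\mathbb{F}_{q^2}/\mathbb{F}_{s^2}}$.
\end{itemize}

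The main obstacle is this last trace step in part (2). The map is twisted, not an $\mathbb{F}_{s^2}$-linear Artin--Schreier map, and it is the parity of $K$ and $M$ that makes both Frobenius powers act as the same involution on $\mathbb{F}_{s^2}$.
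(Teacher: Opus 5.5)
Your proof is correct. It differs from the paper's mainly in being self-contained.

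The paper proves nothing from scratch here. It quotes the additive Hilbert~90 criterion: $X^{2^r}+X=a$ is solvable in $\mathbb{F}_{2^n}$ if and only if $\Tr_{\mathbb{F}_{2^n}/\mathbb{F}_{2^d}}(a)=0$, where $d=\gcd(n,r)$. It then only checks the relevant gcd in each part. Your argument (kernel size, image contained in the trace kernel, equal cardinalities) is exactly a proof of that criterion. Your version therefore avoids the citation at the cost of some length, and for parts (1) and (3) the reductions coincide with the paper's.

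The real divergence is in part (2). The paper substitutes $Z=Y^Q$, which turns the equation into $Z+Z^{2^{m-k}}=W$, and then uses $\gcd(2m,m-k)=2g$. You keep $Y^Q+Y^q$, compute its kernel via $\gcd(2m,m+k)=2g$, and kill the trace by observing that $t^Q=t^q=t^s$ for $t\in\mathbb{F}_{s^2}$. Both routes rest on the same parity facts, namely that $M$ and $K$ are odd.

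The paper's substitution also shows that the ``twist'' you flag as the main obstacle is only apparent. Since $M-K$ is even, $2^{m-k}=(s^2)^{(M-K)/2}$. So in the variable $Z$ the map is an ordinary $\mathbb{F}_{s^2}$-linear Artin--Schreier-type map, and part (2) falls under the same template as parts (1) and (3). This is why the paper can treat all three parts uniformly in a few lines.
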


\begin{proof}
We use the following trace criterion, a standard consequence of the additive form of Hilbert's Theorem~90 (see \cite[Theorem~2.25]{ref:9}).
Let $n,r$ be positive integers and put $d=\gcd(n,r)$. For $a\in\mathbb{F}_{2^n}$,
\[
X^{2^r}+X=a\text{ has a solution in }\mathbb{F}_{2^n}
\quad\Longleftrightarrow\quad
\Tr_{\mathbb{F}_{2^n}/\mathbb{F}_{2^d}}(a)=0.
\]
Here $X\mapsto X^{2^r}$ generates the Galois group of $\mathbb{F}_{2^n}$ over its fixed field $\mathbb{F}_{2^d}$.

For part~(1), substituting $Y=uT$ gives
\[
T^Q+T=\frac{W}{u^{Q+1}}.
\]
Apply the criterion with $n=m$, $r=k$, and $d=g$.

For part~(2), put $Z=Y^Q$. Since the $Q$-power map permutes $\mathbb{F}_{q^2}$, the equation becomes
\[
Z+Z^{2^{m-k}}=W.
\]
The valuation hypothesis gives $\gcd(2m,m-k)=2g$, so the criterion applies with $n=2m$, $r=m-k$, and fixed field $\mathbb{F}_{s^2}$.

For part~(3), the valuation hypothesis gives $\gcd(2m,k)=2g$.
Apply the criterion directly with $n=2m$, $r=k$, and fixed field $\mathbb{F}_{s^2}$.
\end{proof}

For the product model in Subsection \ref{sec:prod}, two trace conditions must be satisfied simultaneously. The following lemma gives the required selection.

\begin{lemma}\label{lem:3.2}
Let $\alpha, \beta \in \mathbb{F}_q^*$ satisfy $\dfrac{\beta}{\alpha} \notin \mathbb{F}_s \setminus \{1\}$. Then there exists $w \in \mathbb{F}_q^*$ such that
\[
\Tr_{\mathbb{F}_q/\mathbb{F}_s}(\alpha w) = 1,\qquad \Tr_{\mathbb{F}_q/\mathbb{F}_s}(\beta w) = 1.
\]
\end{lemma}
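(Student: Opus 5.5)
The plan is to view the problem as solving a linear system over $\mathbb{F}_s$. Consider the $\mathbb{F}_s$-linear map
\[
\Phi\colon \mathbb{F}_q\to\mathbb{F}_s^2,\qquad w\mapsto\bigl(\Tr_{\mathbb{F}_q/\mathbb{F}_s}(\alpha w),\ \Tr_{\mathbb{F}_q/\mathbb{F}_s}(\beta w)\bigr).
\]
We need a $w$ with $\Phi(w)=(1,1)$. Any such $w$ is automatically nonzero, since $\Phi(0)=(0,0)$. The argument splits according to whether $\alpha,\beta$ are linearly independent over $\mathbb{F}_s$, which is the same as asking whether $\beta/\alpha\notin\mathbb{F}_s$.

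\emph{Case $\beta/\alpha\notin\mathbb{F}_s$.} We use the standard fact that the trace bilinear form $(a,w)\mapsto\Tr_{\mathbb{F}_q/\mathbb{F}_s}(aw)$ is nondegenerate, because the trace is not identically zero. It follows that $a\mapsto\Tr_{\mathbb{F}_q/\mathbb{F}_s}(a\,\cdot)$ is an $\mathbb{F}_s$-linear isomorphism from $\mathbb{F}_q$ onto its dual space $\mathrm{Hom}_{\mathbb{F}_s}(\mathbb{F}_q,\mathbb{F}_s)$. Hence the two functionals $\Tr(\alpha\,\cdot)$ and $\Tr(\beta\,\cdot)$ are linearly independent. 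Two linearly independent functionals give a surjective map $\Phi$: if the image were a proper subspace of $\mathbb{F}_s^2$, some nontrivial combination $c_1\Tr(\alpha\,\cdot)+c_2\Tr(\beta\,\cdot)=\Tr((c_1\alpha+c_2\beta)\,\cdot)$ would vanish identically. That would force $c_1\alpha+c_2\beta=0$, a contradiction. So $(1,1)$ lies in the image, and any preimage $w$ works.

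\emph{Case $\beta/\alpha\in\mathbb{F}_s$.} The hypothesis $\beta/\alpha\notin\mathbb{F}_s\setminus\{1\}$ forces $\beta=\alpha$. The two conditions then coincide, and we only need $\Tr_{\mathbb{F}_q/\mathbb{F}_s}(\alpha w)=1$. The trace is a surjective $\mathbb{F}_s$-linear map onto $\mathbb{F}_s$, so we choose $v$ with $\Tr(v)=1$ and set $w=v/\alpha$.

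There is no real obstacle here. The only point needing care is the nondegeneracy of the trace form, which reduces to the trace polynomial $\sum_{j=0}^{M-1}x^{s^j}$ not vanishing on all of $\mathbb{F}_q$, since its degree is less than $q$. The oddness of $M$ is not needed for this lemma. The excluded case $\beta=c\alpha$ with $c\in\mathbb{F}_s\setminus\{0,1\}$ is exactly where the statement fails, because there $\Tr(\beta w)=c\,\Tr(\alpha w)\ne\Tr(\alpha w)$ whenever $\Tr(\alpha w)\ne0$.
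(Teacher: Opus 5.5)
Your proposal is correct and follows essentially the same route as the paper. The paper also handles $\beta/\alpha\notin\mathbb{F}_s$ by showing the two trace functionals are linearly independent, which makes $w\mapsto(\Tr(\alpha w),\Tr(\beta w))$ onto $\mathbb{F}_s^2$, and handles $\beta=\alpha$ by a single trace condition.
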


\begin{proof}
The maps
\[
f_\alpha:w\mapsto\Tr_{\mathbb{F}_q/\mathbb{F}_s}(\alpha w),\qquad
f_\beta:w\mapsto\Tr_{\mathbb{F}_q/\mathbb{F}_s}(\beta w)
\]
are nonzero $\mathbb{F}_s$-linear functionals on $\mathbb{F}_q$, since the trace map is surjective.
If $\beta/\alpha\notin \mathbb{F}_s$, they are linearly independent over $\mathbb{F}_s$.
Indeed, a relation $a f_\alpha+b f_\beta=0$ with $a,b\in \mathbb{F}_s$ gives
\[
\Tr_{\mathbb{F}_q/\mathbb{F}_s}\big((a\alpha+b\beta)w\big)=0
\qquad\text{for every }w\in\mathbb{F}_q.
\]
Surjectivity of the trace implies $a\alpha+b\beta=0$, and hence $a=b=0$.
Consider the $\mathbb{F}_s$-linear map
\[
\Phi:\mathbb{F}_q\longrightarrow \mathbb{F}_s^2,\qquad
w\mapsto\big(f_\alpha(w),f_\beta(w)\big).
\]
If its image were a proper subspace of $\mathbb{F}_s^2$, it would be contained in a line $aX+bY=0$ for some $a,b\in \mathbb{F}_s$, not both zero.
This would give $a f_\alpha+b f_\beta=0$, contradicting linear independence.
Thus $\Phi$ is surjective, and we can choose $w\in\mathbb{F}_q$ with $\Phi(w)=(1,1)$.
If $\beta=\alpha$, it suffices to choose $w$ with $\Tr_{\mathbb{F}_q/\mathbb{F}_s}(\alpha w)=1$.
In either case, $w\neq0$.
\end{proof}

For the monomial models in Subsection~\ref{sec:mono}, we need the analogous selection for the trace $\Tr_{\mathbb{F}_{q^2}/\mathbb{F}_{s^2}}$. The obstruction is described by certain $\mathbb{F}_q$-lines.

Fix $\omega\in \mathbb{F}_{s^2}\setminus \mathbb{F}_s$. Since $\mathbb{F}_q\cap \mathbb{F}_{s^2}=\mathbb{F}_s$, every $S\in \mathbb{F}_{q^2}$ has a unique expression
\[
S=A+\omega B,\qquad A,B\in\mathbb{F}_q.
\]
Define the union of the bad $\mathbb{F}_q$-lines by
\begin{align}\label{defbadline}
\mathcal{B}:=\bigcup_{\eta\in \mathbb{F}_s}\left(\eta+\omega\right)\mathbb{F}_q.
\end{align}
Thus $A+\omega B\in\mathcal{B}$ if and only if $A=\eta B$ for some $\eta\in \mathbb{F}_s$. In particular, $0\in\mathcal{B}$.

\begin{lemma}\label{lem:4.1}
Let $S = A + \omega B \in \mathbb{F}_{q^2}$ with $A,B \in \mathbb{F}_q$. If
\begin{eqnarray} \label{2:hyp}
S \notin \mathcal{B},
\end{eqnarray}
then there exists $\lambda \in \mathbb{F}_q^*$ such that
\begin{eqnarray} \label{2:conc}
\Tr_{\mathbb{F}_{q^2}/\mathbb{F}_{s^2}}(\lambda S) = 1.
\end{eqnarray}
Equivalently, the conclusion \eqref{2:conc} fails precisely when $S \in \mathcal{B}$, the union of $s$ lines $(\eta+\omega)\mathbb{F}_q$ with $\eta\in \mathbb{F}_s$.
\end{lemma}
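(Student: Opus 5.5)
The plan is to compute the $\mathbb{F}_s$-linear map $\lambda\mapsto\Tr_{\mathbb{F}_{q^2}/\mathbb{F}_{s^2}}(\lambda S)$ on $\mathbb{F}_q$ explicitly in terms of $A,B$, and then apply the same linear-functional argument as in Lemma~\ref{lem:3.2}.

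Since $M$ is odd and $\omega\in\mathbb{F}_{s^2}$, we have $\Tr_{\mathbb{F}_{q^2}/\mathbb{F}_{s^2}}(x)=\sum_{j=0}^{M-1}x^{s^{2j}}$. Restricted to $\mathbb{F}_q$, this agrees with $\Tr_{\mathbb{F}_q/\mathbb{F}_s}$, because $j\mapsto 2j \bmod M$ is a bijection of $\mathbb{Z}/M$ when $M$ is odd. By $\mathbb{F}_{s^2}$-linearity, for $\lambda\in\mathbb{F}_q$ we obtain
\[
\Tr_{\mathbb{F}_{q^2}/\mathbb{F}_{s^2}}(\lambda S)=a(\lambda)+\omega\, b(\lambda),\qquad a(\lambda)=\Tr_{\mathbb{F}_q/\mathbb{F}_s}(\lambda A),\quad b(\lambda)=\Tr_{\mathbb{F}_q/\mathbb{F}_s}(\lambda B).
\]
Since $\{1,\omega\}$ is an $\mathbb{F}_s$-basis of $\mathbb{F}_{s^2}$, condition \eqref{2:conc} is equivalent to the pair of conditions $a(\lambda)=1$ and $b(\lambda)=0$. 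Any $\lambda$ with $a(\lambda)=1$ is automatically nonzero.

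Next, the map $\Phi:\lambda\mapsto(a(\lambda),b(\lambda))$ is $\mathbb{F}_s$-linear from $\mathbb{F}_q$ to $\mathbb{F}_s^2$, and by surjectivity of the trace (as in Lemma~\ref{lem:3.2}) its image is the annihilator of $\{(x,y)\in\mathbb{F}_s^2 : xA+yB=0\}$. We split into cases.
\begin{itemize}
\item If $A,B$ are $\mathbb{F}_s$-linearly independent, then $\Phi$ is onto and $(1,0)$ is attained.
\item If $B=0$ and $A\neq0$, choose $\lambda$ with $a(\lambda)=1$; then $b(\lambda)=0$ automatically.
\item If $B\neq0$ and $A=\eta B$ with $\eta\in\mathbb{F}_s$, then $a=\eta b$. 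The conditions $a(\lambda)=1$ and $b(\lambda)=0$ are then incompatible, so \eqref{2:conc} fails.
\item If $A=B=0$, then $S=0$ and \eqref{2:conc} fails.
\end{itemize}
The last two cases are exactly the condition $S\in\mathcal{B}$. The first two cases cover every $S\notin\mathcal{B}$: either $A,B$ are independent, or $A,B$ are dependent with $B=0$ and $A\ne0$, since a dependence with $B\neq0$ forces $A\in\mathbb{F}_sB$. This gives both the lemma and the stated equivalence.

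The main point needing care is the first step. One must justify that the relative trace on $\mathbb{F}_q$ coincides with $\Tr_{\mathbb{F}_q/\mathbb{F}_s}$, which uses the oddness of $M$, and that $\omega$ can be pulled out of the trace, which holds because $\omega$ lies in the fixed field $\mathbb{F}_{s^2}$. Everything after that is the same linear algebra as in Lemma~\ref{lem:3.2}.
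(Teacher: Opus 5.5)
Your proof is correct and takes essentially the same route as the paper. You restrict $\Tr_{\mathbb{F}_{q^2}/\mathbb{F}_{s^2}}$ to $\mathbb{F}_q$, where it equals $\Tr_{\mathbb{F}_q/\mathbb{F}_s}$, and pull out $\omega$, which reduces \eqref{2:conc} to the system $\Tr_{\mathbb{F}_q/\mathbb{F}_s}(\lambda A)=1$, $\Tr_{\mathbb{F}_q/\mathbb{F}_s}(\lambda B)=0$; you then settle it with the independence/surjectivity argument of Lemma~\ref{lem:3.2}. Your explicit bijection $j\mapsto 2j \bmod M$ and the annihilator description of the image of $\Phi$ only spell out details the paper leaves implicit.
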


\begin{proof}
Since $m/g$ is odd, the restriction of $\Tr_{\mathbb{F}_{q^2}/\mathbb{F}_{s^2}}$ to $\mathbb{F}_q$ is $\Tr_{\mathbb{F}_q/\mathbb{F}_s}$. Hence, for $\lambda\in\mathbb{F}_q$,
\[
\Tr_{\mathbb{F}_{q^2}/\mathbb{F}_{s^2}}(\lambda S)
=\Tr_{\mathbb{F}_q/\mathbb{F}_s}(\lambda A)
+\omega\Tr_{\mathbb{F}_q/\mathbb{F}_s}(\lambda B).
\]
Therefore the required trace condition \eqref{2:conc} is equivalent to
\begin{eqnarray} \label{2:sys}
\Tr_{\mathbb{F}_q/\mathbb{F}_s}(\lambda A)=1,\qquad
\Tr_{\mathbb{F}_q/\mathbb{F}_s}(\lambda B)=0.
\end{eqnarray}
The hypothesis \eqref{2:hyp} excludes $A=0$, since otherwise we would have $S=\omega B \in\omega\mathbb{F}_q\subseteq\mathcal{B}$.
If $B=0$, the system \eqref{2:sys} is solvable because $A\neq0$.
If $B/A\notin \mathbb{F}_s$, then the two $\mathbb{F}_s$-linear functions
\[
f_A: \lambda\mapsto\Tr_{\mathbb{F}_q/\mathbb{F}_s}(\lambda A),\qquad
f_B: \lambda\mapsto\Tr_{\mathbb{F}_q/\mathbb{F}_s}(\lambda B)
\]
are linearly independent over $\mathbb{F}_s$. By the argument in the proof of Lemma~\ref{lem:3.2}, the map
\[
\lambda\mapsto\big(\Tr_{\mathbb{F}_q/\mathbb{F}_s}(\lambda A),\Tr_{\mathbb{F}_q/\mathbb{F}_s}(\lambda B)\big)
\]
is onto $\mathbb{F}_s^2$, so the system \eqref{2:sys} is again solvable. Any solution is nonzero.

Conversely, if $S\in\mathcal{B}$, then $A=\eta B$ for some $\eta\in \mathbb{F}_s$. Hence
\[
\Tr_{\mathbb{F}_q/\mathbb{F}_s}(\lambda A)
=\eta\Tr_{\mathbb{F}_q/\mathbb{F}_s}(\lambda B)
\qquad(\lambda\in\mathbb{F}_q),
\]
so the two trace conditions in \eqref{2:sys} cannot hold simultaneously.
\end{proof}


\section{Linear perturbations of the canonical models}\label{sec:models}

Throughout this section, we assume that $m/g$ is odd. We now study linear perturbations of the canonical models in Theorem~\ref{thm:2.2}. The two arguments combine the image and trace-selection lemmas of Section~\ref{sec:Pre} with a selection lemma adapted to each model.

\subsection{The product model}\label{sec:prod}\label{sec:3}
We prove that no invertible $\mathbb{F}_q$-linear perturbation of the product model is a permutation.

\begin{proposition}\label{prop:3.4}
Let \(a,b,c,d \in \mathbb{F}_q\) satisfy $ad + bc \neq 0$. Then
\[
G(y,z) = \big(y^{Q+1} + ay + bz,\; z^{Q+1} + cy + dz\big)
\]
is not a permutation of $\mathbb{F}_q^2$.
\end{proposition}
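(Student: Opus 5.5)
The plan is to exhibit an explicit collision: a point $(y,z)$ and a nonzero shift $(u,v)\in\mathbb{F}_q^2$ with $G(y+u,z+v)=G(y,z)$. In characteristic two, $(y+u)^{Q+1}+y^{Q+1}=u^Qy+uy^Q+u^{Q+1}$. So the collision equations are
\[
u^Qy+uy^Q=u^{Q+1}+au+bv,\qquad v^Qz+vz^Q=v^{Q+1}+cu+dv .
\]
I will take both $u,v\neq0$. By Lemma~\ref{lem:3.1}(1), the first equation is solvable in $y$ if and only if $\Tr_{\mathbb{F}_q/\mathbb{F}_s}\big(1+(au+bv)/u^{Q+1}\big)=0$, and similarly for $z$. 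Since $M=m/g$ is odd, $\Tr_{\mathbb{F}_q/\mathbb{F}_s}(1)=M\cdot 1=1$. Hence both equations are solvable if and only if
\[
\Tr_{\mathbb{F}_q/\mathbb{F}_s}\Big(\frac{au+bv}{u^{Q+1}}\Big)=1,\qquad
\Tr_{\mathbb{F}_q/\mathbb{F}_s}\Big(\frac{cu+dv}{v^{Q+1}}\Big)=1 .
\]

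To decouple these two conditions, write $v=tu$ with $t\in\mathbb{F}_q^*$ and put $w=u^{-Q}$; this $w$ runs over all of $\mathbb{F}_q^*$ because the $Q$-power map is a bijection. The two conditions then read $\Tr(\alpha w)=1$ and $\Tr(\beta w)=1$, where
\[
\alpha=a+bt,\qquad \beta=\frac{c+dt}{t^{Q+1}} .
\]
Lemma~\ref{lem:3.2} supplies such a $w\in\mathbb{F}_q^*$ as soon as $\alpha,\beta\neq0$ and $\beta/\alpha\notin\mathbb{F}_s\setminus\{1\}$. Recovering $u$ from $w$, setting $v=tu$, and solving for $y,z$ then gives the collision, so $G$ is not a permutation.

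The remaining task, and the only real obstacle, is to choose $t\in\mathbb{F}_q^*$ avoiding the bad values; I will do this by counting. Since $ad+bc\neq0$, the pairs $(a,b)$ and $(c,d)$ are both nonzero. Hence $a+bt=0$ and $c+dt=0$ each exclude at most one $t$.

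For each $\eta\in\mathbb{F}_s\setminus\{0,1\}$, the condition $\beta=\eta\alpha$ becomes
\[
\eta\, t^{Q+1}(a+bt)+c+dt=0 .
\]
The left side is a nonzero polynomial of degree $Q+1$ or $Q+2$, because its top-degree part $\eta t^{Q+1}(a+bt)$ is nonzero and $Q+1>1$. So each such $\eta$ excludes at most $Q+2$ values of $t$. The case $\eta=0$ is already covered by $c+dt=0$.

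Altogether at most $2+(s-2)(Q+2)$ values of $t$ are bad. Since $\gcd(M,K)=1$, $1\le K<M$ and $M$ is odd, we have $M\ge3$, so $Q\le s^{M-1}$. Then
\[
2+(s-2)(s^{M-1}+2)<s^M-1 ,
\]
because the difference is $2s^{M-1}-2s+1>0$. For $s=2$ the bound is simply $2<q-1$. Hence a good $t\in\mathbb{F}_q^*$ exists, and the proof is complete.
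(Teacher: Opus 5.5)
Your proof is correct and follows the paper's argument essentially step for step. You use the same shift $v=tu$ with $w=u^{-Q}$ to reduce, via Lemma~\ref{lem:3.1}(1), to the pair of conditions $\Tr_{\mathbb{F}_q/\mathbb{F}_s}(\alpha w)=\Tr_{\mathbb{F}_q/\mathbb{F}_s}(\beta w)=1$, then Lemma~\ref{lem:3.2} for the simultaneous selection, and the same count $2+(s-2)(Q+2)<q-1$ to choose $t$, which is the paper's Lemma~\ref{lem:3.3}. The only differences are cosmetic: you bound $Q\le s^{M-1}$ where the paper uses $q\ge s^{K+1}$, and you justify that the degree-$(Q+2)$ polynomial is nonzero through its top-degree part.
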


In order to apply Lemma~\ref{lem:3.2} in the proof of Proposition \ref{prop:3.4}, we first need a counting lemma.

\begin{lemma}\label{lem:3.3}
Let $a,b,c,d \in \mathbb{F}_q$ satisfy $ad + bc \neq 0$.  Then there exists $r \in \mathbb{F}_q^*$ such that
\[
\left(a + br \right)\left(c + dr\right) \neq 0,
\]
and
\[
\frac{c + dr}{r^{Q+1}(a + br)} \notin \mathbb{F}_s \setminus \{1\}.
\]
\end{lemma}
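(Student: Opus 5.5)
The plan is to count the values of $r\in\mathbb{F}_q^*$ that violate one of the two requirements, and to show that there are fewer than $q-1$ of them. I will bound separately the values of $r$ where $(a+br)(c+dr)=0$ and the values where the quotient lies in $\mathbb{F}_s\setminus\{1\}$.

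\emph{First condition.} Since $ad+bc\neq 0$, the pair $(a,b)$ is not $(0,0)$. Hence the affine polynomial $a+br$ is not identically zero and vanishes for at most one $r\in\mathbb{F}_q$. In the same way, $(c,d)\neq(0,0)$, so $c+dr$ vanishes for at most one $r$. At most $2$ values of $r$ therefore violate $(a+br)(c+dr)\neq 0$.

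\emph{Second condition.} Take $r$ with $r(a+br)(c+dr)\neq0$ and suppose the quotient equals $\eta\in\mathbb{F}_s\setminus\{1\}$. Then $\eta\neq0$, because the numerator $c+dr$ is nonzero. The value $r$ is then a root of
\[
h_\eta(r)=\eta b\,r^{Q+2}+\eta a\,r^{Q+1}+d\,r+c .
\]
Since $\eta\neq0$, $(a,b)\neq(0,0)$ and $Q+1>1$, the polynomial $h_\eta$ has a nonzero coefficient in degree $Q+1$ or $Q+2$. So $h_\eta$ is nonzero of degree at most $Q+2$ and has at most $Q+2$ roots. There are $s-2$ admissible values of $\eta$, so the second condition excludes at most $(s-2)(Q+2)$ values of $r$. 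In total at most $2+(s-2)(Q+2)$ elements of $\mathbb{F}_q^*$ are bad.

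\emph{Comparison with $q-1$; this is the main obstacle.} The crude bound $Q\le q/2$ is not good enough here, so I will use the standing assumption that $m/g$ is odd.
\begin{itemize}
\item If $s=2$, the bad count is at most $2$. Since $m\ge k+1\ge 2$, we have $q-1\ge3>2$, and we are done.
\item If $s\ge4$, I use $M=m/g$ odd, $\gcd(M,K)=1$ and $K<M$. These give $M\ge3$, so $q\ge s^3$. They also give $k\le m-g$, so $Q\le q/s$.
\end{itemize}
In the second case,
\[
2+(s-2)\Big(\frac{q}{s}+2\Big)=q-\frac{2q}{s}+2s-2 .
\]
This is less than $q-1$ exactly when $2s+1<2q/s$, that is, when $s^2+s/2<q$. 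The last inequality holds because $q\ge s^3$ and $s\ge4$.

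Hence some $r\in\mathbb{F}_q^*$ avoids every bad value, and this $r$ satisfies both conclusions of the lemma.
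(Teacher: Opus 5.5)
Your argument is correct and is essentially the paper's: you make the same count of at most $2+(s-2)(Q+2)$ bad values (two from the linear factors, at most $Q+2$ roots of each nonzero $h_\eta$), and you beat it by $q-1$ using $Q\le q/s$, which is the paper's $M\ge K+1$. One harmless slip: $q-\frac{2q}{s}+2s-2<q-1$ is equivalent to $q>s^2-s/2$, not $q>s^2+s/2$, so $q\ge s^2$ already suffices and you need neither the case split nor the oddness of $m/g$ (the paper avoids both by comparing directly with $s^{K+1}-1$, which leaves the difference $2s^K-2s+1>0$).
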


\begin{proof}
There are at most two nonzero elements $r \in \mathbb{F}_q^*$ for which $a + br = 0$ or $c + dr = 0$. For each fixed $\lambda \in \mathbb{F}_s \setminus \{0,1\}$, the equation
\[
\frac{c + dr}{r^{Q+1}(a + br)} = \lambda
\]
becomes
\[
c + dr = \lambda r^{Q+1}(a + br) \quad \Longrightarrow \quad h(r):=\lambda br^{Q+2}+\lambda a r^{Q+1}+dr+c=0.
\]
So $h(r)$ is a polynomial of degree at most $Q+2$ in $r$. It cannot be the zero polynomial in $r$: otherwise $\lambda b=\lambda a=d=c=0$, contradicting $ad + bc \neq 0$.

Thus the number of forbidden $r$ is at most
\[
A:=2 + (s - 2)(Q + 2).
\]
Using $q=s^M$ and $Q=s^K$, we obtain
\[
A=(s - 2)s^K + 2s - 2.
\]
Since $M \geq K + 1$, we have $q-1\geq s^{K+1} - 1$, and
\[
(s^{K+1} - 1) - \big((s - 2)s^K + 2s - 2\big) = 2s^K - 2s + 1 > 0.
\]
So the number of forbidden elements is strictly smaller than $q - 1 = \left|\mathbb{F}_q^*\right|$, and therefore an admissible $r$ exists.
\end{proof}

\begin{proof}[Proof of Proposition~\ref{prop:3.4}]
Since $ad + bc \neq 0$, by Lemma \ref{lem:3.3}, choose $r \in \mathbb{F}_q^*$ such that
\[
a+br\ne0,\qquad c+dr\ne0,\qquad \frac{c+dr}{r^{Q+1}(a+br)}\notin \mathbb{F}_s\setminus\{1\}.
\]
 Put
\[
\alpha = a + br,\qquad \beta = \frac{c + dr}{r^{Q+1}}.
\]
Then $\alpha, \beta \in \mathbb{F}_q^*$ and $\beta/\alpha \notin \mathbb{F}_s \setminus \{1\}$. By Lemma \ref{lem:3.2}, there exists $w \in \mathbb{F}_q^*$  such that
\begin{eqnarray} \label{3:tra}
\Tr_{\mathbb{F}_q/\mathbb{F}_s}(\alpha w) = 1,\qquad \Tr_{\mathbb{F}_q/\mathbb{F}_s}(\beta w) = 1.
\end{eqnarray}
Since $u \mapsto u^{-Q}$ permutes $\mathbb{F}_q^*$, choose $u \in \mathbb{F}_q^*$ satisfying $u^{-Q} = w$, and set $v = ru$.

The derivative of $G$ in the direction $(u,v)$ satisfies
\[
G(Y + u, Z + v) + G(Y, Z) = \big(u^Q Y + u Y^Q + u^{Q+1} + au + bv,\; v^Q Z + v Z^Q + v^{Q+1} + cu + dv\big).
\]
By Lemma \ref{lem:3.1} (1), the first coordinate is zero for a suitable $Y \in \mathbb{F}_q$ if and only if
\[
\Tr_{\mathbb{F}_q/\mathbb{F}_s}\left( \frac{u^{Q+1} + au + bv}{u^{Q+1}} \right) = 0.
\]
Recall that $\alpha=a+br$, $v=ru$ and $u^{-Q}=w$. Since $m/g$ is odd, $\Tr_{\mathbb{F}_q/\mathbb{F}_s}(1)=1$, by \eqref{3:tra} we have
\[
\Tr_{\mathbb{F}_q/\mathbb{F}_s}\left( \frac{u^{Q+1} + au + bv}{u^{Q+1}} \right)=\Tr_{\mathbb{F}_q/\mathbb{F}_s}\big(1 + \alpha u^{-Q}\big) = \Tr_{\mathbb{F}_q/\mathbb{F}_s}(1) + \Tr_{\mathbb{F}_q/\mathbb{F}_s}(\alpha w)=0.
\]
Thus such a $Y \in \mathbb{F}_q$ exists. Similarly, using $v = ru$, the second coordinate can also be made zero for a suitable $Z\in\mathbb{F}_q$ because
\[
\Tr_{\mathbb{F}_q/\mathbb{F}_s}\left( \frac{v^{Q+1} + cu + dv}{v^{Q+1}} \right) = \Tr_{\mathbb{F}_q/\mathbb{F}_s}\big(1 + \beta w\big) = 1 + 1 = 0.
\]
Therefore we obtain a pair $(Y,Z) \in \mathbb{F}_q^2$ and  a nonzero vector $(u,v)\in \mathbb{F}_q^2$ such that $G(Y + u, Z + v) = G(Y, Z)$. Thus $G$ is not injective, and cannot permute $\mathbb{F}_{q}^2$.
\end{proof}

\begin{remark}
When $k=1$ and $m$ is odd, Proposition \ref{prop:3.4} recovers \cite[Theorem 10]{Chanetal-2026}.
\end{remark}

\subsection{The two monomial models}\label{sec:mono}\label{sec:4}\label{subsec:unit-circle}

We now consider all nonzero $\mathbb{F}_q$-linear perturbations of the two monomial models.

\begin{proposition}\label{prop:5.36.3}
 For \((\gamma,\delta)\in (\mathbb{F}_{q^2})^2\setminus\{(0,0)\}\), define a map \(H: \mathbb{F}_{q^2}\to \mathbb{F}_{q^2}\) as follows:
\[
H(x)=
\begin{cases}
x^Q \overline{x} + \gamma x + \delta \overline{x}, & \text{if } \ord_2(m)=\ord_2(k),\\[4pt]
x^{Q+1} + \gamma x + \delta \overline{x}, & \text{if } \ord_2(m)<\ord_2(k).
\end{cases}
\]
Then the following results hold.
\begin{enumerate}
\item[(1)] If \(\ord_2(m)=\ord_2(k)\), then \(H\) is not a permutation of \(\mathbb{F}_{q^2}\), unless
\[
k=1,\qquad \delta=0,\qquad \ord(\gamma^{q-1})=3.
\]
In this exceptional case, \(H\) permutes \(\mathbb{F}_{q^2}\).

\item[(2)] If \(\ord_2(m)<\ord_2(k)\), then \(H\) is not a permutation of \(\mathbb{F}_{q^2}\) for every \((\gamma,\delta)\neq(0,0)\).
\end{enumerate}
\end{proposition}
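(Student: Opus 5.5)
The plan is the same derivative-plus-trace-selection strategy used in Proposition~\ref{prop:3.4}. We look for a nonzero direction $u\in\mathbb{F}_{q^2}$ and a point $x$ with $H(x+u)=H(x)$.

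\textbf{Step 1: linearize the derivative.} In case (2),
\[
H(x+u)+H(x)=u^Qx+ux^Q+u^{Q+1}+\gamma u+\delta\overline{u}.
\]
Substituting $x=uY$ turns this into $u^{Q+1}(Y^Q+Y+1)+\gamma u+\delta\overline u$. In case (1),
\[
H(x+u)+H(x)=u^Q\overline{x}+x^Q\overline{u}+u^Q\overline u+\gamma u+\delta\overline u,
\]
and $x=uY$ gives $u^Q\overline u(Y^Q+\overline Y+1)+\gamma u+\delta\overline u$. By Lemma~\ref{lem:3.1}(2),(3), a collision in direction $u$ exists if and only if $\Tr_{\mathbb{F}_{q^2}/\mathbb{F}_{s^2}}(1+S_u)=0$. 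Here
\[
S_u=\frac{\gamma u+\delta\overline u}{u^{Q+1}}\ \text{in case (2)},\qquad
S_u=\frac{\gamma u+\delta\overline u}{u^{Q}\overline u}\ \text{in case (1)}.
\]
Since $[\mathbb{F}_{q^2}:\mathbb{F}_{s^2}]=M$ is odd, $\Tr(1)=1$, so we need $\Tr_{\mathbb{F}_{q^2}/\mathbb{F}_{s^2}}(S_u)=1$.

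\textbf{Step 2: reduce to the unit circle.} Write $u=t\zeta$ with $t\in\mathbb{F}_q^*$ and $\zeta\in\mu_{q+1}$. Then $S_u=t^{-Q}\,\Sigma(\zeta)$, where
\[
\Sigma(\zeta)=\zeta^{-Q}(\gamma+\delta\zeta^{-2})\ \text{in case (2)},\qquad
\Sigma(\zeta)=\zeta^{1-Q}(\gamma\zeta+\delta\zeta^{-1})\ \text{in case (1)}.
\]
As $t$ runs over $\mathbb{F}_q^*$, the factor $t^{-Q}$ runs over all of $\mathbb{F}_q^*$. By Lemma~\ref{lem:4.1}, $H$ fails to be a permutation as soon as some $\zeta\in\mu_{q+1}$ has $\Sigma(\zeta)\notin\mathcal B$. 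Conversely, if $\Sigma(\mu_{q+1})\subseteq\mathcal B$, then no direction gives a collision, and $H$ is injective, hence a permutation. So the proposition reduces to deciding exactly when $\Sigma(\mu_{q+1})\subseteq\mathcal B$.

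In the exceptional situation $k=1$, $\delta=0$ of case (1), we get $\Sigma(\zeta)=\gamma\zeta^{2-Q}=\gamma$, which is constant. Then $H$ permutes if and only if $\gamma\in(\eta+\omega)\mathbb{F}_q$ for some $\eta\in\mathbb{F}_s=\mathbb{F}_2$, i.e.\ $\gamma^{q-1}=(\eta+\omega)^{q-1}$. Here $\omega$ can be taken a primitive cube root of unity; since $M=m$ is odd, $\omega^{q-1}=\omega^{q}\omega^{-1}=\omega$, and likewise $(1+\omega)^{q-1}=\omega^2$. So the condition is exactly $\ord(\gamma^{q-1})=3$, which is the stated exception.

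\textbf{Step 3: rule out every other case (main obstacle).} We must show $\Sigma(\mu_{q+1})\not\subseteq\mathcal B$ in all remaining cases. The element $\Sigma(\zeta)$ lies on the line $(\eta+\omega)\mathbb{F}_q$ if and only if
\[
\overline{\Sigma(\zeta)}\,(\eta+\omega)=\Sigma(\zeta)\,(\eta+\overline\omega).
\]
Using $\overline\zeta=\zeta^{-1}$ and clearing denominators, this is a polynomial equation in $\zeta$ of degree about $2Q+4$. It is not identically zero unless $\Sigma$ degenerates, and the degenerate cases are $\gamma=0$ or $\delta=0$ with $Q$ small, which are exactly the cases examined in Step 2 and checked by hand. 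A crude count over the $s$ values of $\eta$ then bounds the number of bad $\zeta$ by roughly $s(2Q+4)$. This is smaller than $q+1$ when $M\ge K+2$.

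The hard part is the tight range $M=K+1$, i.e.\ $q=sQ$, where the crude count fails. There I would first try to combine all $s$ lines into a single condition: $A/B\in\mathbb{F}_s$, equivalently $\rho^s=\rho$ for $\rho=A/B$ written in terms of $\Sigma,\overline\Sigma$. I would then exploit that $\Sigma$ is essentially a binomial in $\zeta$, so $\zeta\mapsto\Sigma(\zeta)^{q-1}$ restricted to $\mu_{q+1}$ has small degree after reducing exponents modulo $q+1$. The aim is to show this map takes more than $s$ values.

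If that still fails, the fallback is to choose two directions $\zeta_1,\zeta_2$ with $\zeta_1/\zeta_2\in\mu_{3}$ or similar. For such directions the ratio $\Sigma(\zeta_1)/\Sigma(\zeta_2)$ is explicit, and one can check directly that both values cannot lie in $\mathcal B$ unless $(\gamma,\delta)$ is of the exceptional form.
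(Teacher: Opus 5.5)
You have the right framework, but Step~3, which is the substance of the proposition, is not proved.

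\textbf{What is correct.} Your Steps 1 and 2 are correct and follow the paper's route. The paper also takes the direction $h=yu$ with $y\in\mathbb{F}_q^*$ and $u\in\mu_{q+1}$. It reduces a collision to $z^Q+\overline{z}=1+\lambda S(u)$ (resp.\ $z^Q+z=1+\lambda S(u)$), where $S$ equals your $\Sigma$, and then applies Lemma~\ref{lem:4.1}. You also use the converse half of Lemma~\ref{lem:4.1} to show $H$ is collision-free when $\Sigma(\mu_{q+1})\subseteq\mathcal{B}$. That is a nice touch: it proves the exceptional $H$ permutes without the external result the paper cites.

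\textbf{The gap.} Step~3 corresponds to the paper's Lemma~\ref{lem:5.26.2}. In the range you yourself flag as hard you only list things you would try, so the proposition is not established.

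Put $V=\zeta^2$ (squaring permutes $\mu_{q+1}$), and write membership in $\mathcal{B}$ as $\Sigma^q=\rho\Sigma$ with $\rho\in\mu_{s+1}\setminus\{1\}$.

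In case (1) this reads $\delta^qV^Q+\gamma^qV^{Q-1}+\rho\gamma V+\rho\delta=0$, of degree at most $Q$. It vanishes identically only when $Q=2$, $\delta=0$ and $\gamma^q=\rho\gamma$. The bound $sQ=s^{K+1}\le q$ then settles case (1), with no tight range at all.

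The tight range $q=sQ$ occurs only in case (2), for example $(m,k)=(3,2)$ or $(6,4)$. There the equation $V^{Q+1}(\gamma^q+\delta^qV)=\rho(\gamma V+\delta)$ has degree $Q+2$, and the count $s(Q+2)=q+2s$ exceeds $q+1$.

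The missing idea is a Frobenius twist. Raise the equation to the power $t=q/Q$ and use $V^{q+1}=1$, so that $V^{(Q+1)t}=V^{t-1}$. This gives
\[
\delta^{qt}V^{2t-1}+\rho^t\gamma^tV^t+\gamma^{qt}V^{t-1}+\rho^t\delta^t=0,
\]
a nonzero polynomial (its four exponents are distinct) of degree at most $2t-1$. Since $K$ is even in case (2), $Q\ge s^2\ge 2s$. Hence the $s$ values of $\rho$ cover at most $s(2t-1)\le q-s<q+1$ points of $\mu_{q+1}$.

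\textbf{Why your suggestions do not reach this.}
Merging the $s$ lines into the single condition $A/B\in\mathbb{F}_s$ does not by itself improve the count.
Reducing exponents modulo $q+1$ only helps when $s=2$. For $s\ge4$ (e.g.\ $(m,k)=(6,4)$) all exponents are already below $q+1$; what lowers them is the map $x\mapsto x^t$, which sends $Q$ to $q\equiv-1\pmod{q+1}$.
The $\mu_3$ fallback needs $3\mid q+1$, i.e.\ $m$ odd, which fails for $(m,k)=(6,4)$.
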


The following lemma supplies the unit-circle direction needed to apply Lemma~\ref{lem:4.1} in the proof of Proposition~\ref{prop:5.36.3}.

\begin{lemma}\label{lem:5.26.2}
Let \(\mathcal{B}\) be the union of the bad \(\mathbb{F}_q\)-lines defined by \eqref{defbadline}. For $(\gamma,\delta) \in (\mathbb{F}_{q^2})^2\setminus \{(0,0)\}$, define a map \(S: \mu_{q+1} \to \mathbb{F}_{q^2}\) as follows:
\[
S(u)=
\begin{cases}
\gamma u^{2-Q}+\delta u^{-Q}, & \text{if } \ord_2(m)=\ord_2(k),\\[4pt]
\gamma u^{-Q}+\delta u^{-Q-2}, & \text{if } \ord_2(m)<\ord_2(k).
\end{cases}
\]
Then the following hold.

\begin{enumerate}
\item[(1)] If \(\ord_2(m)=\ord_2(k)\), then there exists \(u\in\mu_{q+1}\) such that
\[
S(u)\notin\mathcal{B}
\]
unless \(k=1\), \(\delta=0\), and \(\ord(\gamma^{q-1})=3\); in this exceptional case, no such \(u\) exists.

\item[(2)] If \(\ord_2(m)<\ord_2(k)\), then for every \((\gamma,\delta)\neq(0,0)\), there exists \(u\in\mu_{q+1}\) such that
\[
S(u)\notin\mathcal{B}.
\]
\end{enumerate}
\end{lemma}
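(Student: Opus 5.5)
The plan is to turn membership in $\mathcal{B}$ into a condition on the $(q-1)$-th power of $S(u)$. Then show that this condition cannot hold for every $u\in\mu_{q+1}$, except in the listed case. Take a nonzero $S\in\mathbb{F}_{q^2}$. Then $S\in(\eta+\omega)\mathbb{F}_q$ if and only if $S^{q-1}=(\eta+\omega)^{q-1}$. Since $M$ is odd and $\eta+\omega\in\mathbb{F}_{s^2}\setminus\mathbb{F}_s$, the map $x\mapsto x^q$ acts on $\mathbb{F}_{s^2}$ as $x\mapsto x^s$. Hence $(\eta+\omega)^{q-1}=(\eta+\omega)^{s-1}$. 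As $\eta$ runs over $\mathbb{F}_s$, these values are exactly the $s$ elements of $\mu_{s+1}\setminus\{1\}$; the value $1$ would correspond to the missing line $\mathbb{F}_q$. So
\[
\mathcal{B}=\{0\}\cup\{S\neq 0:\ S^{q-1}\in\mu_{s+1}\setminus\{1\}\}.
\]
The statement therefore becomes: there is $u\in\mu_{q+1}$ with $S(u)\neq0$ and $S(u)^{q-1}\notin\mu_{s+1}\setminus\{1\}$.

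Next compute $S(u)^{q-1}=\overline{S(u)}/S(u)$, using $\overline u=u^{-1}$. In case (1), multiplying through by $u^Q$ gives
\[
S(u)^{q-1}=u^{2Q-2}\,\frac{\overline{\gamma}+\overline{\delta}u^{2}}{\gamma u^{2}+\delta}.
\]
In case (2) there is an analogous expression, namely $u^{2Q+2}(\overline\gamma+\overline\delta u^{2})/(\gamma u^{2}+\delta)$ up to the normalisation. I will split into three cases: $\delta=0$, $\gamma=0$, and $\gamma\delta\neq0$.

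\emph{Monomial cases.} When $\delta=0$, the ratio becomes $c\,u^{e}$ with $c=\gamma^{q-1}\in\mu_{q+1}$. Here $e=2Q-4$ in case (1) and $e=2Q$ in case (2). When $\gamma=0$, the ratio is $\overline\delta\delta^{-1}u^{e'}$ with a similar exponent $e'$. As $u$ runs over $\mu_{q+1}$, the map $u\mapsto cu^{e}$ runs over a coset of the subgroup of $\mu_{q+1}$ of order $(q+1)/\gcd(e,q+1)$. Since $Q$ and $q$ are powers of $2$, the gcd $\gcd(e,q+1)$ divides $\gcd(Q-2,q+1)$ or $\gcd(Q,q+1)=1$, and one checks it is small. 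So the image is larger than $s+1$ and escapes $\mu_{s+1}\setminus\{1\}$, unless $e=0$. The case $e=0$ means $Q=2$, i.e.\ $k=1$, which forces $g=1$ and $s=2$. Then the ratio is the constant $c=\gamma^{q-1}$. It lies in $\mu_3\setminus\{1\}$ exactly when $\ord(\gamma^{q-1})=3$, and in that situation every $u$ gives $S(u)\in\mathcal{B}$. This is precisely the exceptional case.

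\emph{The case $\gamma\delta\ne0$.} Here I will argue by contradiction. Suppose every $u\in\mu_{q+1}$ with $\gamma u^2+\delta\ne0$ satisfies $R(u)^{s+1}=1$, where $R(u)$ is the ratio above. Then the polynomial
\[
\bigl(u^{2Q\mp2}(\overline\gamma+\overline\delta u^2)\bigr)^{s+1}-(\gamma u^2+\delta)^{s+1}
\]
vanishes on all but at most two points of $\mu_{q+1}$. I would reduce its exponents modulo $q+1$ and compare the resulting monomials. Because $Q=s^K$ and $s\mid Q$, expanding $(\cdot)^{s+1}=(\cdot)^s(\cdot)$ produces only four terms on each side, with explicit exponents. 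Matching them modulo $q+1$ should force $\gamma=0$ or $\delta=0$ (or $k=1$ with $\delta=0$), which is a contradiction. An alternative is to count: $u\mapsto R(u)$ is a rational function of degree about $2Q$, so each of its $s$ permitted values has at most about $2Q$ preimages. The hypothesis would then need $s\cdot 2Q\ge q-1$.

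\emph{Main obstacle.} I expect this last step to be the hard one, specifically when $M=K+1$. In that case the naive degree count $2sQ$ versus $q$ is borderline or fails. I would first try the exact monomial-matching of exponents modulo $q+1$, exploiting the fact that raising to the power $s$ is additive in characteristic $2$. Only if that fails would I fall back on a refined counting argument.
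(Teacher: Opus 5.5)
The reduction is right. Characterizing $\mathcal{B}$ by $S^{q-1}\in\mu_{s+1}\setminus\{1\}$ and your formulas for $S(u)^{q-1}$ are correct. This is also the paper's starting point: it writes the condition as $S(u)^q=\rho S(u)$ with $\rho\in\mu_{s+1}\setminus\{1\}$.

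The genuine gap is the case $\gamma\delta\neq0$, which is the real content of the lemma. You only say that exponent matching ``should force'' $\gamma\delta=0$. The count you set up in the variable $u$ (about $2Q$ preimages per value) gives no contradiction: for $m=5$, $k=3$ one has $2sQ=q$.

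The missing observation is that $q+1$ is odd, so $V=u^2$ permutes $\mu_{q+1}$, and the condition becomes a polynomial equation in $V$. In case~(1) it reads $P_\rho(V)=\overline{\delta}V^Q+\overline{\gamma}V^{Q-1}+\rho\gamma V+\rho\delta=0$. This has degree at most $Q$, and $P_\rho\neq0$ unless $Q=2$, $\delta=0$, $\overline{\gamma}=\rho\gamma$. Zeros of $S$ satisfy every $P_\rho$, so no points need to be discarded. Hence $S(\mu_{q+1})\subseteq\mathcal{B}$ would force $q+1\le sQ=s^{K+1}\le q$. This single count handles $\delta=0$, $\gamma=0$ and $\gamma\delta\neq0$ together, so your three-way split and gcd estimates are unnecessary. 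Also, in case~(1) $M$ and $K$ are both odd, hence $M\ge K+2$; the borderline $M=K+1$ you worry about occurs only in case~(2).

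In case~(2) the $V$-equation $V^{Q+1}(\overline{\gamma}+\overline{\delta}V)=\rho(\gamma V+\delta)$ has degree $Q+2$. For $M=K+1$ this gives $s(Q+2)=q+2s>q+1$, so there your anticipated obstacle is real. The paper removes it as follows:
\begin{itemize}
\item Raise the equation to the power $t=q/Q$ and use $V^{(Q+1)t}=V^{q+t}=V^{t-1}$ on $\mu_{q+1}$.
\item This yields the nonzero polynomial $\overline{\delta}^{\,t}V^{2t-1}+\rho^t\gamma^tV^t+\overline{\gamma}^{\,t}V^{t-1}+\rho^t\delta^t$ of degree at most $2t-1$.
\item Since $K$ is even, $Q\ge s^2\ge 2s$, so $s(2t-1)\le q-s<q+1$.
\end{itemize}

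Your exponent-matching route can also be completed, but it has to be carried out. In $V$, the identity $\overline{S}^{\,s+1}=S^{s+1}$ on $\mu_{q+1}$ reads $V^{E}(\overline{\gamma}+\overline{\delta}V)^{s+1}=(\gamma V+\delta)^{s+1}$, with $E=(Q-1)(s+1)$ in case~(1) and $E=(Q+1)(s+1)$ in case~(2). One checks that after reduction modulo $V^{q+1}-1$ the term $\overline{\gamma}^{\,s+1}V^{E}$ is isolated:
\begin{itemize}
\item In case~(1) with $Q>2$, $s+1<E<E+s+1<q+1$.
\item In case~(2), $E<q+1$ if $M\ge K+3$, and $E\equiv Q+s$ if $M=K+1$.
\item For $Q=2$, use the isolated term $\overline{\delta}^{\,3}V^{6}$ instead.
\end{itemize}
This forces $\gamma\delta=0$.

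Two smaller points in your monomial subcases. First, ``one checks it is small'' needs a proof. Second, your divisibility claim misses the exponent $2Q+4$ (case~(2), $\gamma=0$), where $\gcd(Q+2,q+1)$ can exceed $1$. For $m=3$, $k=2$ the image has exactly $s+1=3$ elements, so ``larger than $s+1$'' is false there, although size $>s$ is all you actually need.
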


\begin{proof}
Suppose that $S(u)\in\mathcal{B}$ for every $u\in\mu_{q+1}$.
If $S(u)\neq0$, write $S(u)=\xi a$, where $a\in\mathbb{F}_q^*$ and
$\xi=\eta+\omega\in \mathbb{F}_{s^2}\setminus\mathbb{F}_s$ for some $\eta\in \mathbb{F}_s$.
Since $q=s^M$ with $M$ odd, the $q$-power Frobenius restricts to the $s$-power Frobenius on $\mathbb{F}_{s^2}$. Hence
\[
S(u)^{q-1}=\xi^{q-1}=\xi^{s-1},
\qquad (\xi^{s-1})^{s+1}=\xi^{s^2-1}=1.
\]
Moreover, $\xi^{s-1}\neq1$, since otherwise $\xi^s=\xi$ would imply $\xi\in \mathbb{F}_s$.
Therefore $S(u)^{q-1} \in \mu_{s+1}\setminus \{1\}$ and thus, for every $u\in\mu_{q+1}$, there is some $\rho\in\mu_{s+1}\setminus\{1\}$ such that
\begin{equation}\label{eq:unit-circle-line}
S(u)^q=\rho S(u).
\end{equation}
When $S(u)=0$, this equation also holds for every such $\rho$.
We count its solutions for each of the $s$ possible values of $\rho \in \mu_{s+1}\setminus\{1\}$.
Since squaring permutes $\mu_{q+1}$, we may use $V=u^2$ in both cases below.

\medskip
\noindent\textbf{Case (1): $\ord_2(m)=\ord_2(k)$.}
Using $u^q=u^{-1}$ and multiplying both sides of \eqref{eq:unit-circle-line} by $u^Q$ gives
\[
P_\rho(V):=\delta^qV^Q+\gamma^qV^{Q-1}+\rho\gamma V+\rho\delta=0.
\]
This polynomial has degree at most $Q$.
If $Q>2$, its four exponents $Q,Q-1,1$ and $0$ are distinct, so $(\gamma,\delta)\neq(0,0)$ implies $P_\rho\neq0$.
If $Q=2$, then $P_\rho$ is the zero polynomial precisely when
\[
\delta=0,\qquad \gamma^q=\rho\gamma.
\]
Here $k=g=1$ and $s=2$, so this occurs for some $\rho\in\mu_{s+1}\setminus\{1\}$ exactly when
$\delta=0$ and $\ord(\gamma^{q-1})=3$.
Outside this exceptional case, each $P_\rho$ has at most $Q$ roots.
The $s$ polynomials therefore account for at most
\[
sQ=s^{K+1}\leq s^M=q<q+1
\]
values of $V$, contradicting the assumption that every element of $\mu_{q+1}$ satisfies one of these equations.

In the exceptional case, $S(u)=\gamma$ is constant and
\[
\mathcal{B}=\omega\mathbb{F}_q\cup(1+\omega)\mathbb{F}_q,
\qquad \{\omega,1+\omega\}=\mu_3\setminus\{1\}.
\]
For $\kappa\in\{\omega,1+\omega\}$, we have $\kappa^{q-1}=\kappa$, since $m$ is odd.
Thus, for $\gamma\neq0$,
\[
\gamma\in\kappa\mathbb{F}_q
\quad\Longleftrightarrow\quad (\gamma/\kappa)^{q-1}=1
\quad\Longleftrightarrow\quad \gamma^{q-1}=\kappa.
\]
It follows that $\gamma\in\mathcal{B}$, so no suitable $u \in \mu_{q+1}$ exists.

\medskip
\noindent\textbf{Case (2): $\ord_2(m)<\ord_2(k)$.}
Using $u^q=u^{-1}$ and multiplying both sides of \eqref{eq:unit-circle-line} by $u^{Q+2}$ gives
\[
V^{Q+1}(\gamma^q+\delta^qV)=\rho(\gamma V+\delta).
\]
Put $t=q/Q=s^{M-K}$, a power of $2$ with $t\geq s\geq2$.
Raising this equation to the $t$-th power and using $Qt=q$ and $V^{q+1}=1$, we obtain
\[
Q_\rho(V):=\delta^{qt}V^{2t-1}+\rho^t\gamma^tV^t
+\gamma^{qt}V^{t-1}+\rho^t\delta^t=0.
\]
The exponents $2t-1,t,t-1,0$ are distinct.
Since $(\gamma,\delta)\neq(0,0)$ and $\rho\neq0$, the polynomial $Q_\rho$ is nonzero and has degree at most $2t-1$.
Here $K$ is positive and even, so $Q=s^K\geq s^2\geq2s$.
Consequently, the $s$ polynomials account for at most
\[
s(2t-1)\leq Qt-s=q-s<q+1
\]
values of $V$ in $\mu_{q+1}$, again a contradiction.
\end{proof}

\begin{proof}[Proof of Proposition~\ref{prop:5.36.3}]
First consider the exceptional case in part~(1). Then $m$ is odd, so
$q\equiv2\pmod6$, and
\[
H(x)=x^{q+2}+\gamma x,\qquad \ord(\gamma^{q-1})=3.
\]
By \cite[Corollary 2.3(2)]{ref:24}, $H$ permutes $\mathbb{F}_{q^2}$.

In all other cases, we construct a collision using the same choice of direction and scalar.
Let $S$ be the function in Lemma~\ref{lem:5.26.2} for the corresponding model.
That lemma gives $u\in\mu_{q+1}$ with $S(u)\notin\mathcal{B}$, and Lemma~\ref{lem:4.1} gives
$\lambda\in\mathbb{F}_q^*$ such that
\[
\Tr_{\mathbb{F}_{q^2}/\mathbb{F}_{s^2}}\bigl(\lambda S(u)\bigr)=1.
\]
Since $y\mapsto y^{-Q}$ permutes $\mathbb{F}_q^*$, choose $y$ with $y^{-Q}=\lambda$ and put $h=yu\neq0$.
We now compute the collision equation in each case.

\medskip
\noindent\textbf{Case (1): $\ord_2(m)=\ord_2(k)$.}
In characteristic two,
\[
H(hz+h)+H(hz)
=h^Q\overline{h}\bigl(z^Q+\overline{z}+1\bigr)+\gamma h+\delta\overline{h}.
\]
Since $h=yu$, $y\in\mathbb{F}_q^*$, and $\overline{u}=u^{-1}$,
\[
\frac{\gamma h+\delta\overline{h}}{h^Q\overline{h}}
=y^{-Q}\bigl(\gamma u^{2-Q}+\delta u^{-Q}\bigr)=\lambda S(u).
\]
Hence
\begin{eqnarray} \label{3:coll1}
H(hz+h)=H(hz)
\quad\Longleftrightarrow\quad
z^Q+\overline{z}=1+\lambda S(u).
\end{eqnarray}

\medskip
\noindent\textbf{Case (2): $\ord_2(m)<\ord_2(k)$.}
The corresponding calculation gives
\[
H(hz+h)+H(hz)
=h^{Q+1}\bigl(z^Q+z+1\bigr)+\gamma h+\delta\overline{h},
\]
and
\[
\frac{\gamma h+\delta\overline{h}}{h^{Q+1}}
=y^{-Q}\bigl(\gamma u^{-Q}+\delta u^{-Q-2}\bigr)=\lambda S(u).
\]
Thus
\begin{eqnarray} \label{3:coll2}
H(hz+h)=H(hz)
\quad\Longleftrightarrow\quad
z^Q+z=1+\lambda S(u).
\end{eqnarray}

\medskip
Since $[\mathbb{F}_{q^2}:\mathbb{F}_{s^2}]=m/g$ is odd, $\Tr_{\mathbb{F}_{q^2}/\mathbb{F}_{s^2}}(1)=1$, and therefore
\[
\Tr_{\mathbb{F}_{q^2}/\mathbb{F}_{s^2}}\bigl(1+\lambda S(u)\bigr)=1+1=0.
\]
By Lemma~\ref{lem:3.1}(2) and~(3), respectively, the collision equations \eqref{3:coll1} and \eqref{3:coll2} in both cases have solutions $z\in \mathbb{F}_{q^2}$. Since $h\neq0$, the inputs $hz$ and $hz+h$ are distinct, so $H$ is not injective.
\end{proof}


\section{Proof of the main theorem}\label{sec:7}\label{sec:conclusion}

The model obstructions in Section~\ref{sec:models} now allow us to prove Theorem~\ref{thm:1.1}.
First recall that by Theorem~\ref{thm:2.2}, any polynomial of the form \eqref{eq:1} that permutes $\mathbb{F}_{q^2}$ must satisfy
$\gcd(Q+1,q-1)=1$. Lemma~\ref{lem:2.1} shows that even $m/g$ excludes permutation, and hence completeness.
It remains to consider the case that $m/g$ is odd.

Suppose that $f$ of the form \eqref{eq:1} permutes $\mathbb{F}_{q^2}$.
By Theorem~\ref{thm:2.2}, write $f=F_1\circ R\circ F_2$, where $R$ is one of the canonical models and
$F_1,F_2$ are $\mathbb{F}_q$-linear isomorphisms between the relevant spaces.
Then
\begin{equation}\label{eq:transport-model}
F_1^{-1}\circ(f+\operatorname{id})\circ F_2^{-1}
=R+T,\qquad T=F_1^{-1}\circ F_2^{-1}.
\end{equation}
Here $T$ is an invertible $\mathbb{F}_q$-linear map on the domain of $R$.
Thus $f$ is a CPP if and only if $R+T$ permutes that space.

For the product model, write
\[
T(y,z)=(ay+bz,cy+dz),\qquad a,b,c,d\in\mathbb{F}_q,
\quad ad+bc\neq0.
\]
Proposition~\ref{prop:3.4} shows that this case cannot yield a CPP.
For either monomial model, $T$ has the form
\[
T(x)=\gamma x+\delta\overline{x},\qquad
\gamma,\delta\in \mathbb{F}_{q^2},\quad (\gamma,\delta)\neq(0,0),
\]
so Proposition~\ref{prop:5.36.3} applies.

\begin{theorem}[Nonexistence for $k>1$]\label{thm:7.1}
Under the hypotheses of Theorem~\ref{thm:1.1}, if $k>1$, then no quadrinomial of the form \eqref{eq:1} is a complete permutation polynomial of $\mathbb{F}_{q^2}$.
\end{theorem}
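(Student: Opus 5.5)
Suppose, for contradiction, that $f$ of the form \eqref{eq:1} with $k>1$ is a CPP of $\mathbb{F}_{q^2}$. Then $f$ is in particular a permutation, so Theorem~\ref{thm:2.2} applies. It gives $\gcd(Q+1,q-1)=1$; by Lemma~\ref{lem:2.1} this forces $m/g$ to be odd, so the standing hypothesis of Section~\ref{sec:models} holds. The same theorem gives $f=F_1\circ R\circ F_2$, where $R\in\{P,M_=,M_<\}$ and $F_1,F_2$ are $\mathbb{F}_q$-linear isomorphisms of the stated shapes. By \eqref{eq:transport-model}, completeness of $f$ is equivalent to $R+T$ permuting the domain of $R$, where $T=F_1^{-1}\circ F_2^{-1}$ is an invertible $\mathbb{F}_q$-linear map. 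We then split into cases according to the model $R$.

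\emph{Product model.} Here $T$ is an invertible $\mathbb{F}_q$-linear map of $\mathbb{F}_q^2$, so $T(y,z)=(ay+bz,cy+dz)$ with $a,b,c,d\in\mathbb{F}_q$ and $ad+bc\neq0$. Then $R+T$ is exactly the map $G$ of Proposition~\ref{prop:3.4}, which is not a permutation. This case does not use $k>1$.

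\emph{Monomial models.} Every $\mathbb{F}_q$-linear map of $\mathbb{F}_{q^2}$ has the form $x\mapsto\gamma x+\delta\overline{x}$, because $\{x,\overline{x}\}$ spans the $\mathbb{F}_q$-linear endomorphisms over $\mathbb{F}_{q^2}$. Invertibility of $T$ gives $(\gamma,\delta)\neq(0,0)$. Therefore $R+T=H$ as in Proposition~\ref{prop:5.36.3}.
\begin{itemize}
\item If $\ord_2(m)<\ord_2(k)$, part~(2) of that proposition says $H$ is never a permutation.
\item If $\ord_2(m)=\ord_2(k)$, part~(1) says $H$ fails to permute unless $k=1$, $\delta=0$ and $\ord(\gamma^{q-1})=3$. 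Since $k>1$, this exception is excluded, so $H$ is not a permutation.
\end{itemize}

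In every case $R+T$ is not a permutation, so $f+\mathrm{id}$ is not one, contradicting completeness. The only point needing care is checking that the shapes of $F_1,F_2$ in Theorem~\ref{thm:2.2} make $T$ land in the parametrized families: $\mathbb{F}_q$-linear on $\mathbb{F}_q^2$ in the product case, and of the form $\gamma x+\delta\overline{x}$ in the monomial case. Both follow directly from $\mathbb{F}_q$-linearity and invertibility, so I expect no real obstacle; the substantive work was already done in Section~\ref{sec:models}.
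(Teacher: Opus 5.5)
Your proposal is correct and follows essentially the same route as the paper. You reduce via Theorem~\ref{thm:2.2} and the transport identity to $R+T$, exclude the product model with Proposition~\ref{prop:3.4}, and exclude both monomial models with Proposition~\ref{prop:5.36.3}, whose sole exception requires $k=1$; your check that $T$ has the form $\gamma x+\delta\overline{x}$ with $(\gamma,\delta)\neq(0,0)$, respectively a matrix with $ad+bc\neq0$, just makes explicit what the paper leaves implicit.
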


\begin{proof}
Suppose that $f$ is a CPP.
By the reduction above, its canonical model must be one of the two monomial models, and $R+T$ must permute $\mathbb{F}_{q^2}$.
Proposition~\ref{prop:5.36.3} excludes both possibilities when $k>1$, since its sole exception requires $k=1$.
\end{proof}

For $k=1$, the same reduction leaves precisely the cubic exception.
We state the resulting characterization with the compatibility of the two linear maps explicit.

\begin{theorem}[Cubic characterization]\label{thm:7.2}
Let $k=1$ and put $C(x)=x^2\overline{x}$.
A quadrinomial $f$ of the form \eqref{eq:1} is a CPP of $\mathbb{F}_{q^2}$ if and only if $m$ is odd and there exist
$\mathbb{F}_q$-linear permutations $L_1,L_2$ of $\mathbb{F}_{q^2}$ and an element $\gamma\in \mathbb{F}_{q^2}^*$ with
$\ord(\gamma^{q-1})=3$ such that
\[
f=L_1\circ C\circ L_2,\qquad
L_1\circ(\gamma\operatorname{id})\circ L_2=\operatorname{id}.
\]
\end{theorem}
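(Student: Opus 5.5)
Both directions go through the reduction \eqref{eq:transport-model} with $k=1$, so $Q=2$, $s=2$, $g=1$.

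For the forward direction, suppose $f$ is a CPP. Then $f$ permutes $\mathbb{F}_{q^2}$, so Theorem~\ref{thm:2.2} and Lemma~\ref{lem:2.1} give that $m/g=m$ is odd. Hence $\ord_2(m)=0=\ord_2(1)$. By Theorem~\ref{thm:2.2}, $f$ is $\mathbb{F}_q$-linearly equivalent either to the product model $P$ or to $M_=(x)=x^2\overline{x}=C(x)$; the model $M_<$ cannot occur, since $\ord_2(m)<\ord_2(k)$ fails.

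The product case is excluded by Proposition~\ref{prop:3.4}, applied to $T=F_1^{-1}\circ F_2^{-1}$ written as $(ay+bz,cy+dz)$ with $ad+bc\ne0$. So $f=L_1\circ C\circ L_2$ with $L_i(x)=\alpha_ix+\beta_i\overline{x}$. Set $T=L_1^{-1}\circ L_2^{-1}$, so that $C+T$ permutes $\mathbb{F}_{q^2}$. As an invertible $\mathbb{F}_q$-linear map of $\mathbb{F}_{q^2}$, $T$ has the form $\gamma x+\delta\overline{x}$ with $(\gamma,\delta)\ne(0,0)$. Proposition~\ref{prop:5.36.3}(1) then forces $\delta=0$ and $\ord(\gamma^{q-1})=3$, so $T=\gamma\,\mathrm{id}$ with $\gamma\neq0$. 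The identity $L_1^{-1}\circ L_2^{-1}=\gamma\,\mathrm{id}$ is equivalent to $L_1\circ(\gamma\,\mathrm{id})\circ L_2=\mathrm{id}$: compose with $L_1$ on the left and $L_2$ on the right.

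For the converse, assume $m$ is odd and $f=L_1\circ C\circ L_2$ with $L_1\circ(\gamma\,\mathrm{id})\circ L_2=\mathrm{id}$. Then $f$ is a permutation, being a composite of permutations; here $C$ permutes $\mathbb{F}_{q^2}$ by the exceptional case of Proposition~\ref{prop:5.36.3}, or directly because $\gcd(q+2,q^2-1)=\gcd(3,q^2-1)=1$ for $m$ odd. Moreover
\[
f+\mathrm{id}=L_1\circ C\circ L_2+L_1\circ(\gamma\,\mathrm{id})\circ L_2=L_1\circ(C+\gamma\,\mathrm{id})\circ L_2,
\]
where we use the $\mathbb{F}_q$-linearity of $L_1$, more precisely its additivity, to combine the two terms. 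Since $m$ is odd, $k=1$, $\delta=0$, and $\ord(\gamma^{q-1})=3$, Proposition~\ref{prop:5.36.3}(1) shows that $C+\gamma\,\mathrm{id}$ permutes $\mathbb{F}_{q^2}$. Hence $f+\mathrm{id}$ is a permutation and $f$ is a CPP.

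The argument is essentially bookkeeping. The one point needing care is that the equivalence supplied by Theorem~\ref{thm:2.2} uses maps $L_i$ that are genuine $\mathbb{F}_q$-linear permutations of $\mathbb{F}_{q^2}$, so that $T$ is a nonzero map of the form $\gamma x+\delta\overline{x}$ and the statement of Proposition~\ref{prop:5.36.3} applies verbatim.
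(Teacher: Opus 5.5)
Your proof follows the paper's argument essentially step for step, and the approach is correct. You use the same reduction via \eqref{eq:transport-model}, exclude the product model by Proposition~\ref{prop:3.4} and $M_<$ by valuation, and invoke Proposition~\ref{prop:5.36.3}(1) to force $T=\gamma\operatorname{id}$. The converse likewise combines the two terms through the additivity of $L_1$.

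The one step to repair is your justification, in the converse, that $C$ permutes $\mathbb{F}_{q^2}$. Neither reason you give works as written:
\begin{itemize}
\item The exceptional case of Proposition~\ref{prop:5.36.3} concerns $C+\gamma\operatorname{id}$ with $\gamma\neq0$. That proposition explicitly excludes $(\gamma,\delta)=(0,0)$, so it says nothing about $C$ itself.
\item The equality $\gcd(q+2,q^2-1)=\gcd(3,q^2-1)$ is false. Since $3\mid 4^m-1=q^2-1$ for every $m$, the right-hand side is always $3$.
\end{itemize}
The correct computation uses $q^2-1=(q+2)(q-2)+3$. This gives $\gcd(q+2,q^2-1)=\gcd(q+2,3)$, which equals $1$ because $q\equiv2\pmod 3$ when $m$ is odd. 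This is exactly the fact the paper invokes, and with it your argument is complete.
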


\begin{proof}
Suppose first that $f$ is a CPP.
The parity condition gives $m$ odd, since $g=1$.
The product model has already been excluded, and the strict-valuation monomial model cannot occur because $\ord_2(k)=0$.
Hence $f=L_1\circ C\circ L_2$ for $\mathbb{F}_q$-linear permutations $L_1,L_2$ of $\mathbb{F}_{q^2}$.
By \eqref{eq:transport-model}, the map $C+T$ permutes $\mathbb{F}_{q^2}$, where $T=L_1^{-1}\circ L_2^{-1}$.
Proposition~\ref{prop:5.36.3}(1) gives
\[
T=\gamma\operatorname{id},\qquad \ord(\gamma^{q-1})=3.
\]
Composing on the left by $L_1$ and on the right by $L_2$ yields the required compatibility condition.

Conversely, suppose that the stated conditions hold.
The compatibility identity and the linearity of $L_1$ give
\[
f+\operatorname{id}
=L_1\circ(C+\gamma\operatorname{id})\circ L_2.
\]
Since $m$ is odd, $\gcd(q+2,q^2-1)=1$, so $C(x)=x^{q+2}$ permutes $\mathbb{F}_{q^2}$.
Also, $C+\gamma\operatorname{id}$ permutes $\mathbb{F}_{q^2}$ by \cite[Corollary 2.3(2)]{ref:24}.
Thus both $f$ and $f+\operatorname{id}$ are permutations, as required.
\end{proof}

The compatibility condition in Theorem~\ref{thm:7.2} is equivalent to requiring that the two identities in Theorem~\ref{thm:1.1}(ii) use the same maps.
Together with the parity observation above, Theorems~\ref{thm:7.1} and~\ref{thm:7.2} prove Theorem~\ref{thm:1.1} throughout the stated exponent range.

For $Q=2$, this recovers the simultaneous linear-equivalence characterization of \cite[Theorem 12]{Chanetal-2026}.
The construction of Tu et al.\ \cite{ref:18} is placed within this description by \cite[Theorem 9]{Chanetal-2026}.


\section*{Acknowledgments}

This work was supported in part by the National Natural Science Foundation of China under Grants 62302001; in part by the Outstanding Youth Scientific Research Projects of Anhui Provincial Department of Education under Grant 2022AH030073; in part by the New Era Education Quality Engineering Project of Anhui Province under Grant 2024dshwyx023; and in part by the Research Grants Council (RGC) of Hong Kong under Grant 16307524.



\end{document}